\newtheorem{Lemma}{\indent Lemma}[section]
\newtheorem{Condition}{\indent Condition}[section]
\newtheorem{Theorem}{\indent Theorem}[section]
\newtheorem{Remark}{\indent Remark}[section]
\renewenvironment{proof}{{ \emph{Proof.} }}{\hfill $\Box$ \\}
\title{{Stochastic collocation methods via minimization of Transformed $L_1$ penalty }  \thanks{The work of Ling Guo and Yongle Liu was
 supported by the NSF of China (No.11671265) and Program for Outstanding Academic leaders in Shanghai City (No.151503100). The work of Jing Li was partially supported by the U.S. DOE ASCR project "Uncertainty Quantification For Complex Systems Described by Stochastic Partial Differential Equations".}}
\author{ Ling Guo \thanks{Department of Mathematics, Shanghai Normal University, Shanghai, China (lguo@shnu.edu.cn).}
\and
Jing Li \thanks{Pacific Northwest National Laboratory, Richland, Washington, 99354, USA (jing.li@pnnl.gov).}
\and
Yongle Liu \thanks{Department of Mathematics, Southern University of Science and Technology, Shenzhen, China (11749318@mail.sustc.edu.cn).}
           }
\begin{document}
\maketitle

\graphicspath{{polynomial_results/}{analytical_results/}}

\begin{abstract}
We study the properties of sparse reconstruction of transformed $\ell_1$ (TL1) minimization and present improved theoretical results about the recoverability and the accuracy of this reconstruction from undersampled measurements. We then combine this method with the stochastic collocation approach to identify the coefficients of sparse orthogonal polynomial expansions for uncertainty quantification. In order to implement the TL1 minimization, we use the DCA-TL1 algorithm which was introduced by Zhang and Xin. In particular, when recover non-sparse functions, we adopt an adaptive DCA-TL1 method to guarantee the sparest solutions. Various numerical examples, including sparse polynomial functions recovery and non-sparse analytical functions recovery are presented to demonstrate the recoverability and efficiency of this novel method and its potential for problems of practical interests.
\end{abstract}

\begin{keywords}
Uncertainty quantification, Stochastic collocation, DCA-TL1 minimization, Compressive sensing, Restricted isometry property

\end{keywords}


\pagestyle{myheadings}
\thispagestyle{plain}

\section{Introduction}
This paper is concerned with pursuing the sparse approximation of a target function $f:\mathbb{R}^d\rightarrow\mathbb{R}, d\geq1$, given relatively limited training data in the frame of the generalized polynomial chaos (gPC) \cite{Xiu_2002Wiener} approach. This problem stems from the field of uncertainty quantification (UQ) when the stochastic system is large-scaled and thus only limited simulations are affordable. Therefore the number of training samples is much less than the cardinality of the gPC basis. This results in an underdetermined system with an infinite number of solutions. Rooted in the idea of compressive sensing \cite{Candes_2006nos, Candes_2006Stablesrec, Donoho_2006Cs, Donoho_2006srs}, stochastic collocation methods via $\ell_1$ minimization have been shown to be efficient for recovering a sparse approximation of the target function. For more details and applications of collocation methods via $\ell_1$ minimization,  one can see \cite{Doostan_2011nonadapted,Yan_2012Sc,Rauhut_2012sparseLegen,mathelin_gallivan_2012,LiuGuo2016,Jakeman_2016generalizedsample,Chen2017,GuoNarayanZhou2018,XuZhou2018} and references therein.

Although $\ell_1$ minimization has attracted considerable attentions in compressive sensing, it may not perform well on some practical problems (when measurement matrix is coherent) and can not find the sparsest solutions. Recently, several nonconvex penalties have been developed to enhance the sparsity of the solutions since their metric, compared with the $\ell_1$ norm, are more `closer' to the $\ell_0$ norm.  Among them the $\ell_p$ penalty for $p\in (0,1)$ is the most popular one. The recoverability of $\ell_p$-minimization with $p\in(0,1)$ is established in \cite{Clayton2017}. Stochastic collocation method via $\ell_p$ minimization to recover the sparse gPC coefficients was explored in \cite{GuoLiu_2017Lq}.
A nonconvex panelty $\ell_{1-2}$ was proposed in \cite{Yin_2015L12} and \cite{Lou_2015ComputeL12} utilizing a difference of convex algorithm (DCA) \cite{Tao_1988DCoptim, Tao_1997convexanalysisdc} to obtain the sparse solution, and the numerical experiments demonstrated that $\ell_{1-2}$ is always better than $\ell_1$ in terms of sparsity. This $\ell_{1-2}$ minimization was further studied and combined with stochastic collocation for sparse gPC approximation in \cite{Yan_2015L12}.

In this paper, we focus on the nonconvex transformed $\ell_{1}$ penalty (TL1), which has been recently proposed and studied in \cite{Zhang_2016DCATL1}. There Zhang and Xin have shown that the TL1 penalty `interpolates' $\ell_{0}$ and $\ell_{1}$ norms through a nonnegative parameter $a\in (0,+\infty)$ and satisfies unbiasedness, sparsity and Lipschitz continuity and the DCA is proposed to compute TL1-minimizations. They have also conducted comparisons between  DCA-TL1 with DCA on other non-convex penalties such as PiE \cite{NguyenPiC2015}, MCP \cite{zhang2010}, and SCAD \cite{Fan_2001Variableselec},  and shown that DCA-TL1 performs competitively well. Based on all these facts we implement TL1 minimization for stochastic collocation.

The main contribution of this paper lies in the following two aspects. Firstly, based on the work in \cite{Candes_2008Rip} and \cite{Zhang_2016DCATL1}, we present improved theoretical results about the recoverability and the accuracy of reconstruction via TL1 minimization. These estimates cannot verify the superiority of the TL1 penalty over the $\ell_{1}$ penalty, just as the theoretical finding of the $\ell_{1-2}$ minimization in \cite{Yan_2015L12}. Secondly, we consider the stochastic collocation method via TL1 minimization to identify the gPC expansion coefficients. In particular, we focus on the Legendre polynomials in multi-dimensional case. In order to implement the constrained minimization problem with TL1 penalty, we employ the DCA-TL1 algorithm that introduced in \cite{Zhang_2016DCATL1} for sparse polynomial representations. For non-sparse functions, in order to pursue the sparest solution, we develop an adaptive approach, called the adaptive DCA-TL1 method. In particular, this algorithm allows several choices for parameter $a$. Given a set of values of $a$, for each fixed $a_i\in a$, we use the TL1 minimization algorithm to generate the minimizer, if the next solution obtained by setting $a_j\in a$ and $a_j\neq a_i$ is more sparse than the the previous one, then we select the better parameter $a_j$ and its corresponding more sparse solution. In this way, finally we will get the best parameter and the sparsest solution. Various numerical experiments demonstrate that the TL1 minimization method is
the most efficient and effective method among all three selected methods (the standard $\ell_{1}$, the $\ell_{1-2}$ and the TL1 minimization).

The rest of the paper is organized as follows. In Section 2, we introduce some preliminaries which are involved in the following parts and set up the TL1 minimization for stochastic collocation which is the main problem to be addressed in this paper. Section 3 is devoted to establish the new theoretical estimates for both sparse and non-sparse signals recovery, whether the noise is contained or not. The recover properties of stochastic collocation method using TL1 minimization are discussed in Section 4. Also, in this section, we introduce the DCA-TL1 algorithm and adaptive DCA-TL1 algorithm pseudo-code. Numerical experiments are then presented in Section 5, showing that the proposed approach outperforms the other two methods--the standard $\ell_{1}$ and the $\ell_{1-2}$ minimization. Concluding remarks are given in Section 6.

\section{gPC expansion and Problem setup}\label{sec:setup}
When a simulation model is computational expensive to run, building an approximation of the response of the model output with respect to the variations in the model input, is often an efficient approach to quantify parametric uncertainty. Polynomial approximation of a function (model) $f(z): \mathbb{R}^d\rightarrow\mathbb{R}, d\geq1$ where
$z= (z_1, z_2, \ldots, z_d)$ is a $d$-dimensional random variable with associated probability density function $w(z)$ is widely used for its fast convergence. Moreover, in this paper we will approximate $f$ with the gPC expansion.

The gPC expansion is an orthogonal polynomial approximation to random functions which has been broadly utilized in uncertainty quantification in recent decades \cite{Ghanem_1991Spectralappro, Xiu_2002Wiener,NarayanZhou2015,TangZhou2015}. Let $\alpha = (\alpha_1,\dots, \alpha_d)\in \mathbb{N}_0^{d}$ be a multi-index with $|\alpha| = \alpha_1+\dots+\alpha_d$, and $k\geq 0$ be an integer. Then the $k$th-degree gPC expansion $f_k(z)$ of function $f(z)$ is defined as,
\begin{equation}\label{eq:f_gPC_expan}
f(z) \approx f_k(z) \triangleq \sum_{\alpha \in \Lambda^d_{k}}x_{\alpha}\Phi_{\alpha}(z),\quad \Lambda^d_{k}= \left\{ \alpha\in \mathbb{N}_0^d : |\alpha| \leq k \right\}.
\end{equation}
where
$\Phi_\alpha(z)$ are the basis function satisfied
\begin{align}\label{eq:orthogonal}
\int\Phi_\alpha(z)\Phi_\beta(z)dw(z)=\delta_{\alpha\beta},
\end{align}
with $\delta_{\alpha\beta} \triangleq \prod_{i=1}^d \delta_{\alpha_i,\beta_i}$ to be the multi-index Kronecker delta. The probability density $w(z)$ in the orthogonality \eqref{eq:orthogonal} determines the type of orthogonal polynomials. In the following numerical examples, we primarily concentrate on the uniform distribution and the corresponding Legendre polynomials. For another types of orthogonal polynomials, one can see \cite{Xiu_2002Wiener} for details.
By placing an order for the orthogonal polynomials, we can change (\ref{eq:f_gPC_expan}) into the following single index version
\begin{align}
\label{eq:truncatePCE}
f_k(z)=\sum_{\alpha\in \Lambda^T_{k}}x_{\alpha}\Phi_{\alpha}(z)=\sum_{n=1}^{N}{x}_n \Phi_n(z).
\end{align}
where
\begin{align*}
N=  \# \Lambda^d_{k} \triangleq \dim T_k^d = \left( \begin{array}{c} d+k \\ k\end{array}\right).
\end{align*}

The goal is then to determine the expansion coefficients in this gPC type approximation \eqref{eq:truncatePCE}  when $f(z)$ has a sparse representation. Given a small set of $M\ll N$ unstructured realizations $\{z^i\}_{i=1}^M$ and the corresponding outputs $b=(f(z^1),...f(z^M))^{T}$, we seek an interpolation type sparse solution $x\in \mathbb{R}^N$ such that $f_k(z^i) = f(z^i)$ for $i = 1,\dots,M$ and meanwhile the number of nonzero entries of $x$ is the smallest. Namely, we solve the following optimization problem subject to a underdetermined linear system,
\begin{equation}\label{eq:gPC_L0}
\min \|x\|_0 \quad \text{subject to}\ Ax=b,
\end{equation}
where $x=(x_1,\ldots, x_N)^T \in \mathbb{R}^N$ is the coefficient vector in \eqref{eq:truncatePCE}, $\|x\|_0$ indicates the number of nonzero entries of  $x$ , and $A \in \mathbb{R}^{M \times N}$ usually called the measurement matrix is written as
 \begin{eqnarray}\label{eq:matrixelem}
A=(a_{ij})_{1\leq i\leq M,1\leq j\leq N}, \quad a_{ij}=\Phi_{j}(z^i).
 \end{eqnarray}
However, it is well know that this $\ell_0$ minimization problem \eqref{eq:gPC_L0} is a NP-hard problem \cite{Natarajan_1995L0}, we use the newly established constrained TL1 minimization \cite{Zhang_2016DCATL1} to get the gPC expansion coefficients $x$ by solving
\begin{align}\label{eq:gPC_TL1}
\min_{x\in\mathbb{R}^N}P_a(x) \quad \text{subject to}\ Ax=b,
\end{align}
where $P_a$ is some penalty function defined in \eqref{eq:P_a}, and we discuss in Section.\ref{sec:TL1} in details.

\section{Preliminaries}\label{sec:pre}
In order to present our method to solve problem \eqref{eq:gPC_TL1} we first briefly introduce some preliminaries of sparse recovery problem. From now on, we use $A$ to denote the interpolation matrix and $B$ to denote general sensing matrix (measurement matrix).
\subsection{Standard sparse recovery}
We consider the standard recovery problem of underdetermined system $Bx=b$, i.e.,
\begin{equation}\label{eq:L0min}
\min \|x\|_0 \quad \text{subject to}\ Bx=b,
\end{equation}
where the measurement matrix $B\in\mathbb{R}^{M\times N}$ with $M\ll N$. This $\ell_0$-minimization problem is NP-hard  in general (c.f. \cite{Natarajan_1995L0}). The well studied and popular approach to find sparse solutions to \eqref{eq:L0min} is compressive sensing (CS) \cite{Candes_2005error, Candes_2008Rip, Davies_2010RICLp,Donoho_2006srs}, where under certain circumstance, the solution or the approximation of the solution to \eqref{eq:L0min} can be found by the well studied $\ell_1$ minimization, which consists in finding the minimizer of the problem
\begin{equation}\label{eq:L1min}
\min \|x\|_1 \quad \text{subject to}\ Bx=b.
\end{equation}
This optimization problem can be solved with efficient algorithms from convex optimization (c.f. \cite{Berg_2007spgl}).

\subsection{Restricted isometry property} We recall the concept of restricted isometry constants (RIC), which characterizes matrices that are nearly orthonormal.
\begin{definition} \label{def:s-sparse-vec} A vector $x$ is said to be s-sparse if it has at most s nonzero entries, i.e., $x$ is supported on subset $T \subset \{1,\dots,N\}$ with $|T|\leq s$.
\end{definition}

\begin{definition}[restricted isometry constant\cite{Candes_2005Decoding, Candes_2006Stablesrec}]\label{def:RIC}
For each integer $s=1,2,\ldots,$ define the isometry constant $\delta_s$ of a matrix $B$ as the smallest number such that
\begin{equation*}
(1-\delta_s)\|x\|_2^2\leq\|Bx\|_2^2\leq(1+\delta_s)\|x\|_2^2
\end{equation*}
holds for any $s$-sparse vector $x$.
\end{definition}

Then we have the following lemma.
\begin{lemma}[c.f. \cite{Candes_2008Rip}]\label{lem:delta_2slem}
\begin{equation*}
|\langle Bx,Bx^{\prime}\rangle|\leq\delta_{s+s^{\prime}}\|x\|_2\|x^{\prime}\|_2.
\end{equation*}
for all $x$, $x^{\prime}$ supported on disjoint subsets $T,T^{\prime}\subseteq\{1,\ldots,N\}$ with $|T|\leq s,|T^{\prime}|\leq s^{\prime}$.
\end{lemma}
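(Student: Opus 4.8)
The plan is to exploit the parallelogram (polarization) identity together with the disjoint-support hypothesis, which is exactly what turns a bound on inner products into a statement about norms of $(s+s')$-sparse vectors to which Definition \ref{def:RIC} directly applies. First I would reduce to a normalized setting: since both sides of the claimed inequality are homogeneous of degree one in $\|x\|_2$ and in $\|x'\|_2$, I may assume $\|x\|_2 = \|x'\|_2 = 1$, after which it suffices to prove $|\langle Bx, Bx'\rangle| \leq \delta_{s+s'}$.

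The decisive structural observation is that $T \cap T' = \emptyset$ forces $x + x'$ and $x - x'$ to be supported on $T \cup T'$, a set of cardinality at most $s+s'$, so both $x \pm x'$ are $(s+s')$-sparse and the restricted isometry bounds are available for them. Disjointness of the supports also gives orthogonality, so that $\|x \pm x'\|_2^2 = \|x\|_2^2 + \|x'\|_2^2 = 2$ under the normalization above.

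Next I would apply the polarization identity
\begin{equation*}
\langle Bx, Bx'\rangle = \frac{1}{4}\Bigl(\|B(x+x')\|_2^2 - \|B(x-x')\|_2^2\Bigr).
\end{equation*}
Bounding the first term from above by $(1+\delta_{s+s'})\|x+x'\|_2^2 = 2(1+\delta_{s+s'})$ and the second from below by $(1-\delta_{s+s'})\|x-x'\|_2^2 = 2(1-\delta_{s+s'})$ via Definition \ref{def:RIC} collapses the right-hand side to $\delta_{s+s'}$, giving the upper estimate $\langle Bx, Bx'\rangle \leq \delta_{s+s'}$. Interchanging the two expansions (equivalently, running the same argument with the roles of the $+$ and $-$ terms reversed) produces the matching lower bound $\langle Bx, Bx'\rangle \geq -\delta_{s+s'}$, and together these yield the absolute-value inequality. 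Undoing the normalization by the homogeneity noted above restores the general factor $\|x\|_2\|x'\|_2$.

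There is no genuine obstacle here; the proof is a short and clean calculation once the polarization identity is invoked. The only point demanding care is the dual use of disjointness: it is needed both to guarantee that $x \pm x'$ are sparse of the combined order $s+s'$ (so the RIC is applicable) and to produce the clean evaluation $\|x \pm x'\|_2^2 = 2$ through orthogonality. Missing either role would break the chain, so I would make sure to state the disjoint-support consequence explicitly before invoking Definition \ref{def:RIC}.
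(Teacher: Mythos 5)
Your proof is correct, and it is essentially the same argument as the one in the source the paper cites: the paper states this lemma without proof, referring to Cand\`es (2008), whose Lemma~2.1 is proved by exactly this normalization-plus-polarization (parallelogram identity) computation using the disjointness of supports both for $(s+s')$-sparsity of $x\pm x'$ and for $\|x\pm x'\|_2^2=2$. The only cosmetic point is to note that the normalization step implicitly assumes $x,x'\neq 0$, the degenerate case being trivial.
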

Follow the ideas of compressive sensing, we pursuit to approximate the solution to \eqref{eq:L0min} by solving the transformed $\ell_1$ minimization problem.
\subsection{Constrained transformed $\ell_1$ (TL1) minimization}\label{sec:TL1}
To obtain the TL1 minimization problem we first introduce the penalty function
\begin{equation}\label{eq:rho_a}
\rho_a(|t|)=\frac{(a+1)|t|}{a+|t|},
\end{equation}
and its properties which will be frequently involved later in the proofs.
\begin{Condition}[c.f. \cite{Zhang_2016DCATL1,Fan_2001Variableselec}]\label{condi:penaltyf}
The penalty function $\rho(\cdot)$ satisfies:
\begin{itemize}
  \item $\rho_a(t)$ is increasing and concave in $t\in[0,\infty]$ for all $a>0$;
  \item $\rho_a^{\prime}(t)$ is continuous with $\rho_a^{\prime}(0_{+})\in(0,\infty)$ for all $a>0$;
\end{itemize}
\end{Condition}

\begin{Lemma}\label{lem:normtrieq}
For all $a>0$,  $x_i,x_j \in \mathbb{R}$, the following inequalities hold:
\begin{equation*}
\rho_a(|x_i|)-\rho_a(|x_j|)\leq\rho_a(\big||x_i|-|x_j|\big|)\leq\rho_a(|x_i+x_j|)\leq\rho_a(|x_i|+|x_j|)\leq\rho_a(|x_i|)+\rho_a(|x_j|).
\end{equation*}
\end{Lemma}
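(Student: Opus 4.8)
The plan is to isolate the single analytic fact that drives the whole chain, namely the \emph{subadditivity} of $\rho_a$ on $[0,\infty)$: for all $u,v\ge 0$ one has $\rho_a(u+v)\le\rho_a(u)+\rho_a(v)$. This is where I would start, since it is the only step using more than monotonicity or an elementary triangle inequality. To prove it I would invoke that, by Condition~\ref{condi:penaltyf}, $\rho_a$ is concave on $[0,\infty)$, together with $\rho_a(0)=0$ (read off directly from \eqref{eq:rho_a}). Writing $s=u+v$ with $u,v>0$ and expressing $u=\tfrac{u}{s}\,s+\tfrac{v}{s}\,0$, concavity gives $\rho_a(u)\ge\tfrac{u}{s}\rho_a(s)$ and likewise $\rho_a(v)\ge\tfrac{v}{s}\rho_a(s)$; adding these and using $\rho_a(0)=0$ yields $\rho_a(u)+\rho_a(v)\ge\rho_a(s)=\rho_a(u+v)$, with the degenerate cases $u=0$ or $v=0$ immediate.

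With subadditivity in hand, I would dispatch the four inequalities from right to left, reading off each from an elementary inequality followed by the stated properties. Setting $u=|x_i|$, $v=|x_j|$, the rightmost inequality $\rho_a(|x_i|+|x_j|)\le\rho_a(|x_i|)+\rho_a(|x_j|)$ is exactly subadditivity. The next one $\rho_a(|x_i+x_j|)\le\rho_a(|x_i|+|x_j|)$ follows from the triangle inequality $|x_i+x_j|\le|x_i|+|x_j|$ and the monotonicity of $\rho_a$. For $\rho_a(\big||x_i|-|x_j|\big|)\le\rho_a(|x_i+x_j|)$ I would use the reverse triangle inequality in the form $\big||x_i|-|x_j|\big|\le|x_i+x_j|$ (obtained by applying the usual $\big||a|-|b|\big|\le|a-b|$ with $b\mapsto-x_j$) and again monotonicity.

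The leftmost inequality $\rho_a(|x_i|)-\rho_a(|x_j|)\le\rho_a(\big||x_i|-|x_j|\big|)$ is the one place where a short case split is needed, and is the only mild obstacle. If $|x_i|\ge|x_j|$, then $\big||x_i|-|x_j|\big|=|x_i|-|x_j|$, and I would rewrite $|x_i|=(|x_i|-|x_j|)+|x_j|$ and apply subadditivity to obtain $\rho_a(|x_i|)\le\rho_a(|x_i|-|x_j|)+\rho_a(|x_j|)$, which is the claim. If instead $|x_i|<|x_j|$, the left-hand side is negative by monotonicity, while the right-hand side is nonnegative (since $\rho_a(0)=0$ and $\rho_a$ is increasing and nonnegative), so the inequality holds trivially. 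Assembling the four bounds yields the full chain; note that no RIP or sensing-matrix structure enters, and the argument relies purely on concavity, monotonicity, and $\rho_a(0)=0$.
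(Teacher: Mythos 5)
Your proof is correct, but it takes a genuinely different route from the paper's. The paper splits the chain in two: it simply cites Lemma 2.1 of Zhang and Xin for the three right-hand inequalities $\rho_a(\big||x_i|-|x_j|\big|)\leq\rho_a(|x_i+x_j|)\leq\rho_a(|x_i|+|x_j|)\leq\rho_a(|x_i|)+\rho_a(|x_j|)$, and then proves the new leftmost inequality by a direct algebraic computation specific to the rational form $\rho_a(t)=\frac{(a+1)t}{a+t}$, combining the two fractions over a common denominator to get $\rho_a(|x_i|)-\rho_a(|x_j|)=\frac{(a+1)(|x_i|-|x_j|)}{a+|x_i|+|x_j|+|x_ix_j|/a}$ and then enlarging by shrinking the denominator and passing to absolute values in the numerator. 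You instead work entirely at the abstract level of Condition~\ref{condi:penaltyf}: you first establish subadditivity from concavity plus $\rho_a(0)=0$, then get the three right-hand inequalities from subadditivity, the triangle inequality, and monotonicity (so you reprove the cited Zhang--Xin portion rather than quoting it), and obtain the leftmost inequality from the decomposition $|x_i|=(|x_i|-|x_j|)+|x_j|$ with an explicit case split on the sign of $|x_i|-|x_j|$. Your approach buys generality --- it applies verbatim to any penalty satisfying Condition~\ref{condi:penaltyf} with $\rho(0)=0$, not just TL1 --- and it is arguably cleaner on the case $|x_i|<|x_j|$: the paper's displayed chain \eqref{eq:trianinequalityl} shrinks a denominator under a numerator whose sign is unrestricted, an intermediate step that as written reverses when $|x_i|<|x_j|$ (the final conclusion survives there only because the left side is then negative and the right side nonnegative, which is exactly the trivial case you dispatch explicitly). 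The paper's computation, in exchange, is shorter and yields the leftmost bound in one display without invoking subadditivity.
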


\begin{proof}
The main idea is to use the monotonicity and concavity of $\rho_a(t)$.
The part,
\begin{equation}\label{eq:trianinequalitym}
\rho_a(\big||x_i|-|x_j|\big|)\leq\rho_a(|x_i+x_j|)\leq\rho_a(|x_i|+|x_j|)\leq\rho_a(|x_i|)+\rho_a(|x_j|),
\end{equation}
has been studied in \cite{Zhang_2016DCATL1} by Lemma 2.1.

To complete the proof, we have
\begin{equation}\label{eq:trianinequalityl}
\begin{split}
\rho_a(|x_i|)-\rho_a(|x_j|)&=\frac{(a+1)|x_i|}{a+|x_i|}-\frac{(a+1)|x_j|}{a+|x_j|}\\
&=\frac{(a+1)(|x_i|-|x_j|)}{a+|x_i|+|x_j|+|x_ix_j|/a}\\
&\leq\frac{(a+1)(|x_i|-|x_j|)}{a+\big||x_i|-|x_j|\big|}\\
&\leq\frac{(a+1)(\big||x_i|-|x_j|\big|)}{a+\big||x_i|-|x_j|\big|}=\rho_a(\big||x_i|-|x_j|\big|).
\end{split}
\end{equation}
Combining \eqref{eq:trianinequalitym} and \eqref{eq:trianinequalityl}, we get Lemma \ref{lem:normtrieq}.
\end{proof}

\begin{Remark}[\cite{Zhang_2016DCATL1}]
It worth noting that the penalty function $\rho_a$, $a>0$ acts almost like a norm. However, it lacks absolute scalability, i.e., $\rho_a(cx)\neq|c|\rho_a(x)$ in general.
\end{Remark}

We define the penalty function $P_a$ we used in our gPC coefficients recovery problem \eqref{eq:gPC_TL1} as
\begin{align}\label{eq:P_a}
P_a(x)=\sum_{i=1,\ldots,N}\rho_a(|x_i|),
\end{align}
where $\rho_a(|x|)$ is defined in \eqref{eq:rho_a}.
As discussed in \cite{Zhang_2016DCATL1}, the contour plots of the $P_a(x)$ penalty-metric in Figure \ref{fig:contours} show intuitively that when the parameter $a$ tends to zero, the level curves of TL1 approach the $x$ and $y$ axes (i.e., close to the $\ell_0$ norm), hence promoting sparsity. Meanwhile, when $a$ tends to $+\infty$, the level curves converge to those of $\ell_1$ norm.

\begin{Remark}
Note that, by the definition $P_a(x) =\sum_{i=1,\ldots,N}\rho_a(|x_i|) \leq \frac{a+1}{a}\|x\|_1$.
\end{Remark}

\begin{figure}[!ht]
\centering
\includegraphics[width=0.48\textwidth,height=0.23\textheight]{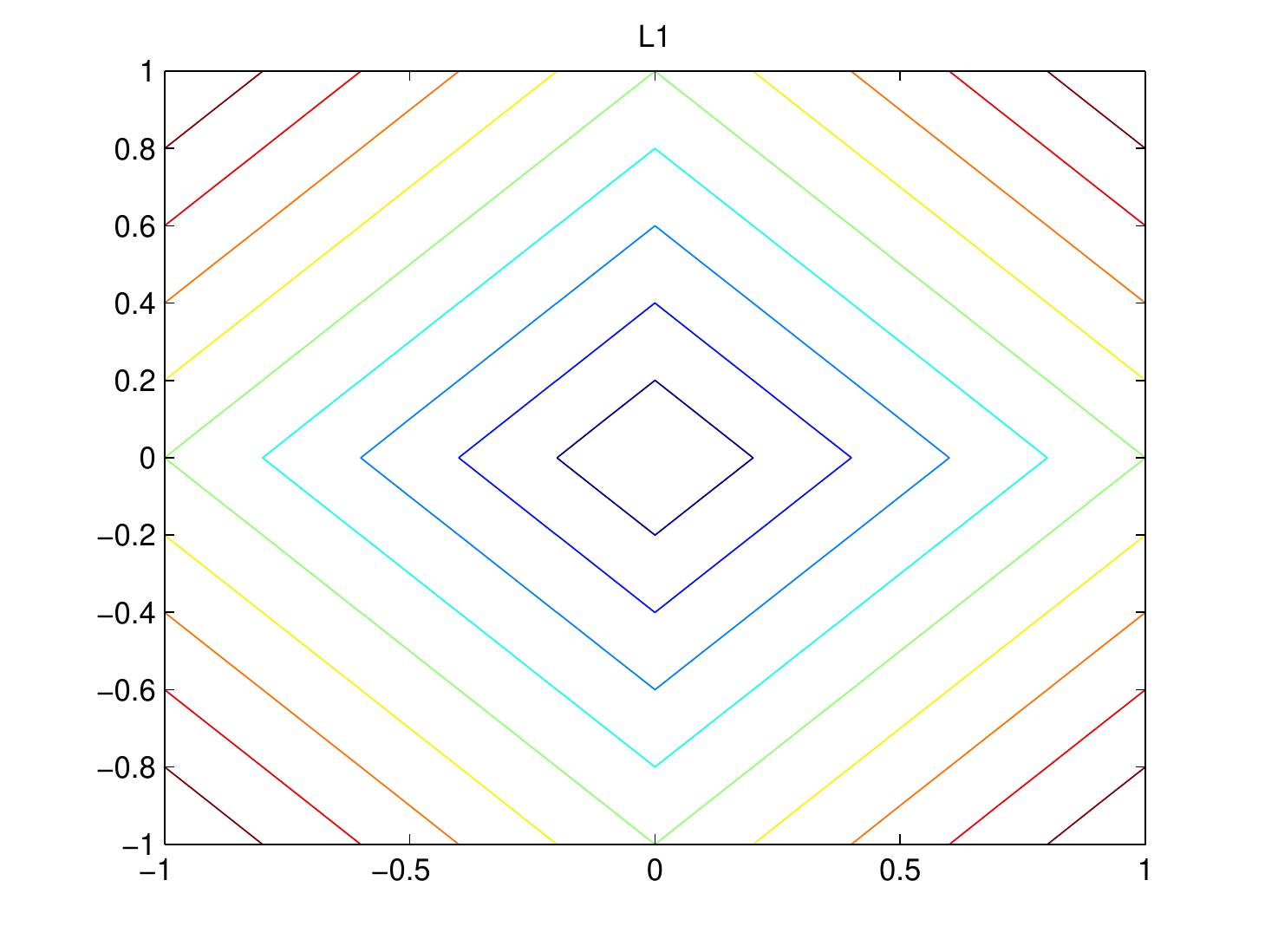}\quad
\includegraphics[width=0.48\textwidth,height=0.23\textheight]{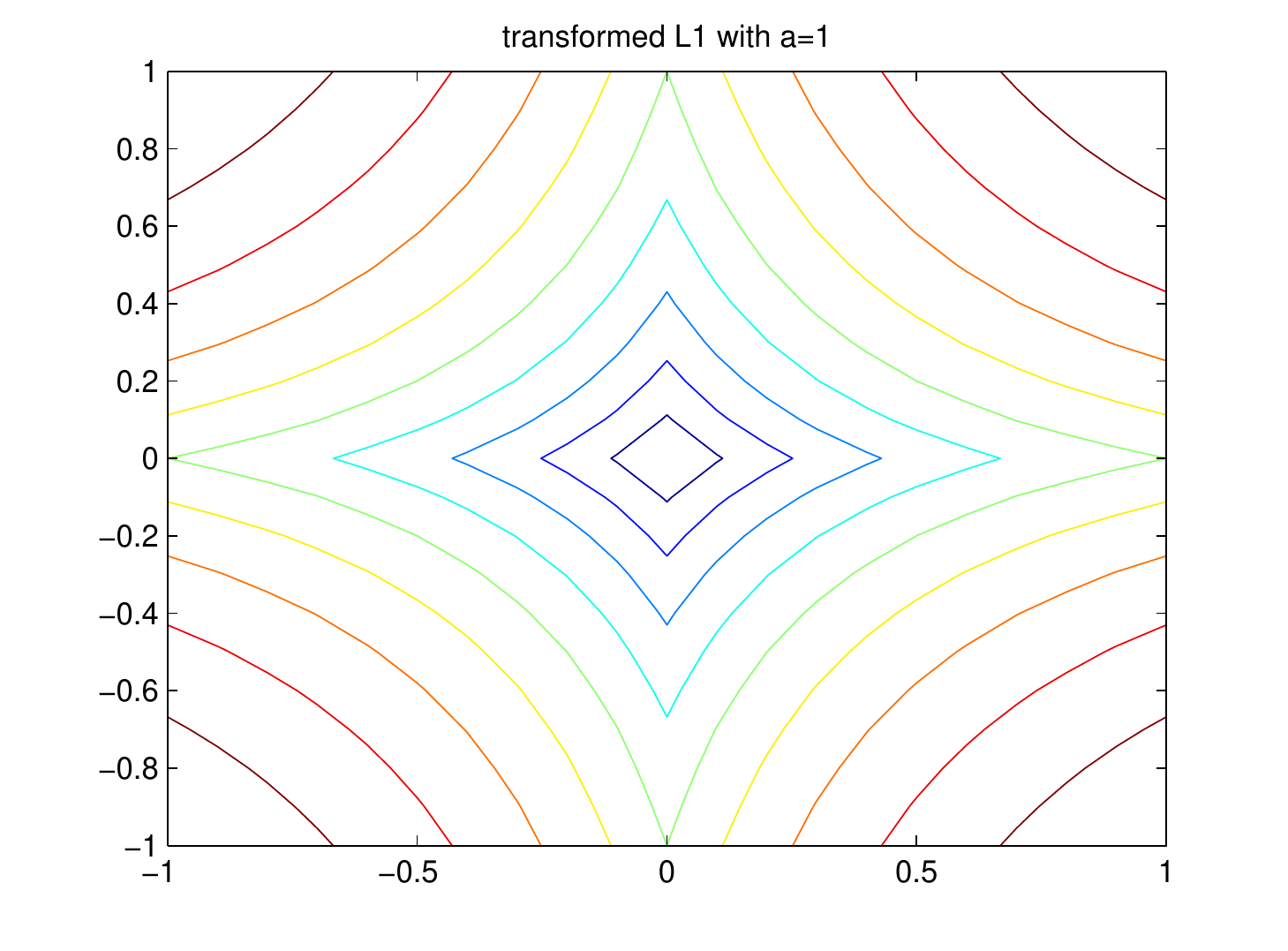}\\
\includegraphics[width=0.48\textwidth,height=0.23\textheight]{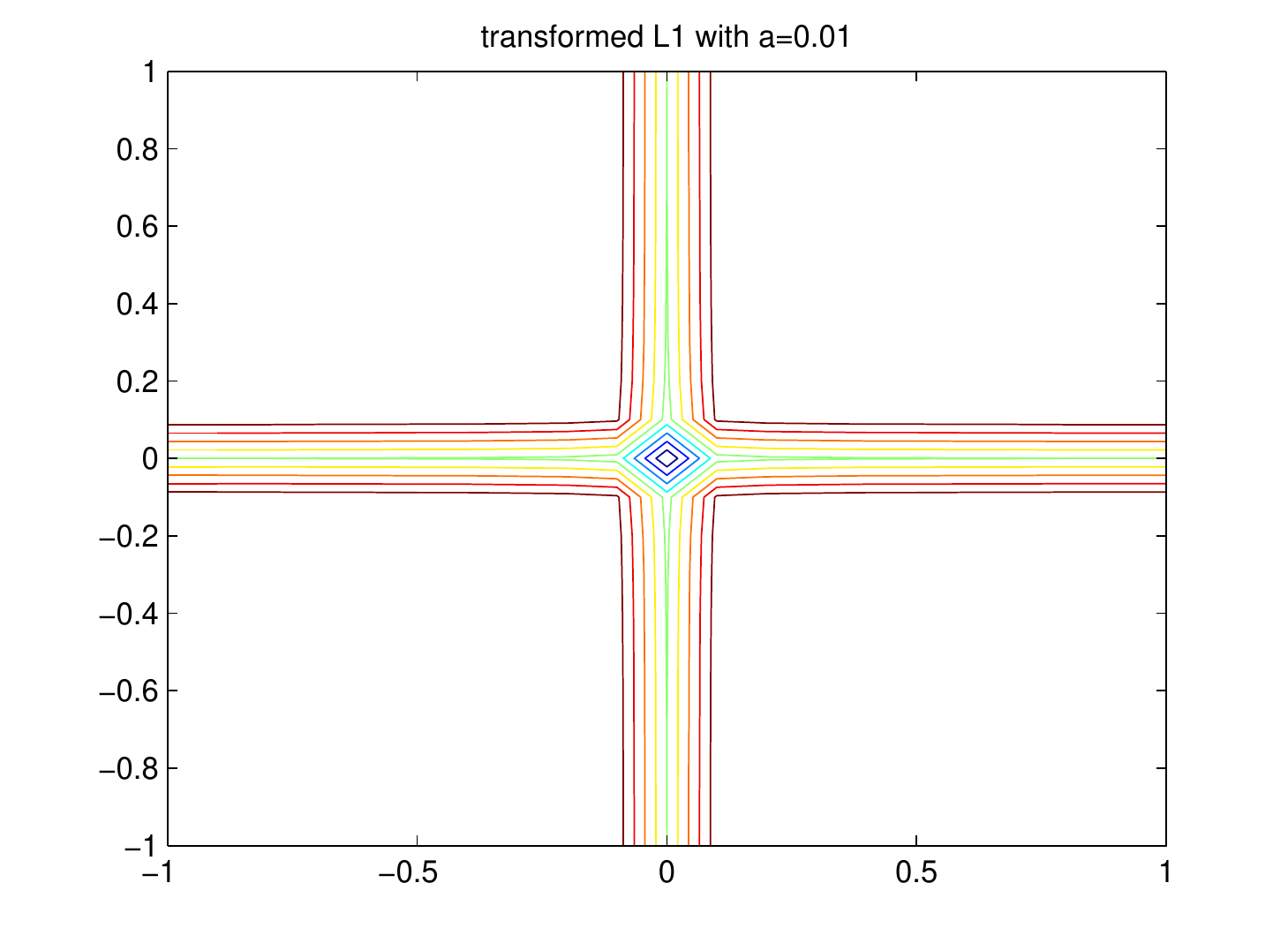}\quad
\includegraphics[width=0.48\textwidth,height=0.23\textheight]{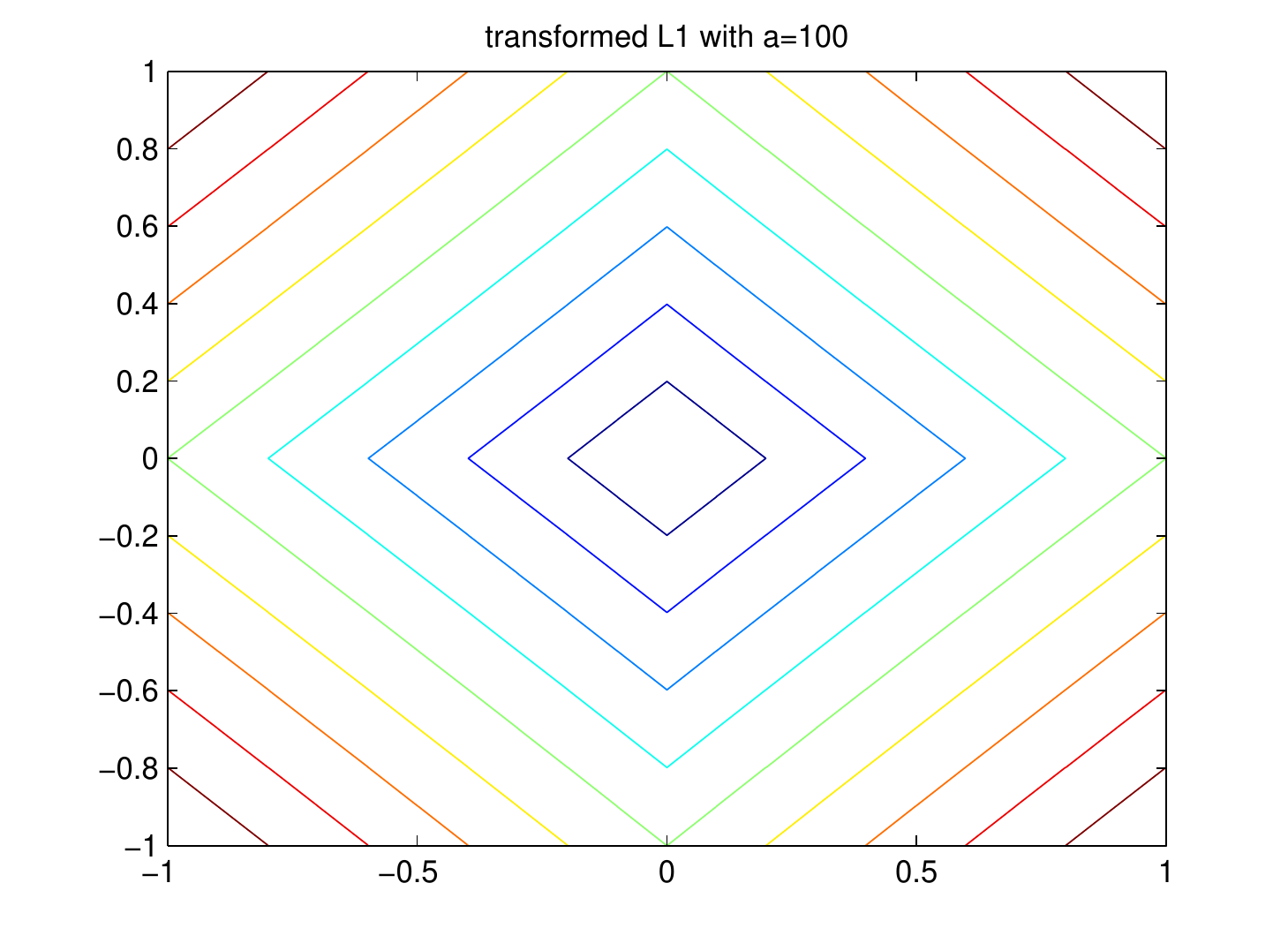}
\caption{Level lines of two sparsity promoting measures. Compared with $\ell_1$, the level lines of TL1 is closer to the axes or those of $\ell_0$ when parameter $a$ is small (e.g., $a=0.01$). When $a$ becomes larger (e.g., $a=100$), the level lines converge to those of $\ell_1$.}\label{fig:contours}
\end{figure}

The recoverability of the general TL1 minimization problem
\begin{align}\label{eq:TL1}
\min_{x\in\mathbb{R}^N}P_a(x) \quad \text{subject to}\ Bx=b,
\end{align}
with $P_a$ being some penalty function defined in \eqref{eq:P_a},
has been studied in \cite{Zhang_2016DCATL1} in the context of exact recovery, which assume the true solution is $s$-sparse. The major results from \cite{Zhang_2016DCATL1} are summarized as follows.
\begin{Theorem}[\cite{Zhang_2016DCATL1}]\label{thm:oldthm}
For a given sensing matrix $B$, if there is a number $R>|T|$, such that
\begin{align}\label{eq:penaltycon1}
\delta_R+\frac{R}{|T|}\frac{a^2}{(a+1)^2}\delta_{R+|T|}<\frac{R}{|T|}\frac{a^2}{(a+1)^2}-1,
\end{align}
then there exists $a^{\ast}>0$, depending only on matrix $B$, such that for any $a>a^{\ast}$, the minimizer $\hat{x}$ for \eqref{eq:gPC_TL1} is unique and equal to the minimizer $\tilde{x}_0$ in \eqref{eq:gPC_L0}.
\end{Theorem}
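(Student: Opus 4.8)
The plan is to follow the restricted-isometry machinery of Candès, adapted to the nonconvex penalty $P_a$ through the triangle-type inequalities of Lemma \ref{lem:normtrieq}. Let $\hat{x}$ be any minimizer of \eqref{eq:gPC_TL1}, write $T_0$ for the support of the sparse solution $\tilde{x}_0$ (so $|T_0|=|T|$), and set $h=\hat{x}-\tilde{x}_0$, which lies in $\ker B$ because both vectors are feasible. The whole argument reduces to showing $h=0$, since this simultaneously gives $\hat{x}=\tilde{x}_0$ and, $\hat{x}$ being an arbitrary minimizer, uniqueness. First I would derive the TL1 cone condition: optimality gives $P_a(\hat{x})\le P_a(\tilde{x}_0)$, and applying the inequality $\rho_a(|x_i|)-\rho_a(|x_j|)\le\rho_a(|x_i+x_j|)$ from Lemma \ref{lem:normtrieq} coordinatewise on $T_0$ (with $x_i=\tilde{x}_{0,i}$, $x_j=h_i$) yields $P_a(h_{T_0^c})\le P_a(h_{T_0})$.

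Next I would decompose $h_{T_0^c}$ into blocks $T_1,T_2,\dots$ of size $R$, indexing coordinates in order of decreasing magnitude, and set $S=T_0\cup T_1$, so that $|S|=R+|T|$. Since the entries are sorted, each coordinate of $h_{T_{j+1}}$ is bounded by the average magnitude on $T_j$, which gives the tail estimate $\sum_{j\ge2}\|h_{T_j}\|_2\le R^{-1/2}\|h_{T_0^c}\|_1$. The restricted isometry property of Definition \ref{def:RIC} then enters twice: from $Bh=0$ one has $\|Bh_S\|_2=\|\sum_{j\ge2}Bh_{T_j}\|_2$, so the lower bound on the set $S$ of size $R+|T|$ together with the upper bound on each size-$R$ block $T_j$ (summed via the triangle inequality, with any inner products between distinct blocks controlled by Lemma \ref{lem:delta_2slem}) combine to give $\|h_{T_0}\|_2\le\|h_S\|_2\le\sqrt{(1+\delta_R)/(1-\delta_{R+|T|})}\,\sum_{j\ge2}\|h_{T_j}\|_2$. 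This is the source of the ratio $(1+\delta_R)/(1-\delta_{R+|T|})$ in \eqref{eq:penaltycon1}.

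The remaining step is to close the loop by bounding $\sum_{j\ge2}\|h_{T_j}\|_2$ back in terms of $\|h_{T_0}\|_2$ using the cone condition, and this is where the factor $a^2/(a+1)^2$ and the threshold $a^{\ast}$ appear. On the support I would use the clean concave bound $P_a(h_{T_0})\le\frac{a+1}{a}\|h_{T_0}\|_1\le\frac{a+1}{a}\sqrt{|T|}\,\|h_{T_0}\|_2$, which follows from $\rho_a(t)\le\rho_a'(0_+)\,t=\frac{a+1}{a}t$ by concavity. Chaining the tail estimate, the lower conversion $\|h_{T_0^c}\|_1\le P_a(h_{T_0^c})$ (valid up to the correction discussed below), the cone condition, and this support bound produces $\sum_{j\ge2}\|h_{T_j}\|_2\le\frac{a+1}{a}\sqrt{|T|/R}\,\|h_{T_0}\|_2$; inserting this into the restricted isometry inequality above and squaring gives $\|h_{T_0}\|_2\le\kappa\|h_{T_0}\|_2$ with $\kappa^2=\frac{1+\delta_R}{1-\delta_{R+|T|}}\frac{(a+1)^2}{a^2}\frac{|T|}{R}$, and $\kappa<1$ is precisely the rearranged hypothesis \eqref{eq:penaltycon1}. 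Hence $\|h_{T_0}\|_2=0$, and then the cone condition forces $0\le P_a(h_{T_0^c})\le P_a(h_{T_0})=0$, i.e. $h=0$.

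The main obstacle is the lower conversion $\|h_{T_0^c}\|_1\le P_a(h_{T_0^c})$. Unlike the upper bound, $\rho_a$ is concave and saturates at $a+1$, so $\rho_a(t)\ge t$ holds only while $t$ stays below a fixed level, and in general one only obtains $\|x\|_1\le\frac{a+\|h\|_\infty}{a+1}P_a(x)$. Absorbing this gap is exactly what forces $a$ to be large and produces the threshold $a^{\ast}$: one must first bound the magnitudes of any minimizer (using $P_a(\hat{x})\le P_a(\tilde{x}_0)$ together with the restricted isometry property, which already guarantees that $\tilde{x}_0$ is the unique sparsest solution of \eqref{eq:gPC_L0}), and then take $a^{\ast}$ large enough that $\frac{a+\|h\|_\infty}{a+1}$ is close enough to $1$ for the strict inequality \eqref{eq:penaltycon1} to survive. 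Arranging this magnitude control so that $a^{\ast}$ is signal-independent and depends only on $B$ is the delicate point, and I expect it to require either a normalization of the data or an enumeration over the finitely many candidate supports.
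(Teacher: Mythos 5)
You should know at the outset that this paper never actually proves Theorem \ref{thm:oldthm}: it is imported verbatim from \cite{Zhang_2016DCATL1}, and the only argument of this type carried out in the paper is the appendix proof of Theorem \ref{thm:noisyrecover}, which uses size-$s$ blocks in the Cand\`es style rather than your size-$R$ blocks. Measured against the cited Zhang--Xin route, your skeleton is the correct reconstruction: the cone condition $P_a(h_{T_0^c})\le P_a(h_{T_0})$ obtained from Lemma \ref{lem:normtrieq}, the sorted size-$R$ decomposition with tail bound $\sum_{j\ge2}\|h_{T_j}\|_2\le R^{-1/2}\|h_{T_0^c}\|_1$, the ratio $(1+\delta_R)/(1-\delta_{R+|T|})$ extracted from $Bh=0$, and your verification that $\kappa^2<1$ is exactly the rearranged hypothesis \eqref{eq:penaltycon1} are all sound, as is your closing step $\|h_S\|_2\le\kappa\|h_S\|_2\Rightarrow h_{T_0}=0\Rightarrow h=0$.

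The genuine gap is that you stop precisely at the step that constitutes the content of the theorem, namely the construction of $a^{\ast}$; everything before it is standard RIP bookkeeping. The step can be closed, and doing so settles your open question in the negative. From $\rho_a(\|\hat x\|_\infty)\le P_a(\hat x)\le P_a(\tilde x_0)\le\frac{a+1}{a}\|\tilde x_0\|_1$ and the explicit inverse $t=a\rho_a(t)/(a+1-\rho_a(t))$ one gets, e.g., $\|\hat x\|_\infty\le 4\|\tilde x_0\|_1$ for all $a\ge\max\{1,4\|\tilde x_0\|_1\}$, hence $\|h\|_\infty\le C:=5\|\tilde x_0\|_1$ uniformly in $a$. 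Your corrected chain then requires $\frac{1+\delta_R}{1-\delta_{R+|T|}}\frac{|T|}{R}\bigl(\frac{a+C}{a}\bigr)^2<1$, and since \eqref{eq:penaltycon1}, once true for one value of $a$, holds with strictly increasing slack as $a$ grows (the factor $a^2/(a+1)^2$ increases to $1$ while the $\delta$'s are fixed), there is a finite $a^{\ast}$ beyond which the slack absorbs $(a+C)^2/(a+1)^2$. But the $a^{\ast}$ so obtained depends on $\|\tilde x_0\|_1$, i.e., on the data $b$, and this is not an artifact of the method: the identity $\rho_a(\lambda t)=\lambda\frac{a+1}{a+\lambda}\rho_{a/\lambda}(t)$ shows that the TL1 problem with data $\lambda b$ at parameter $a$ is, up to a positive constant, the TL1 problem with data $b$ at parameter $a/\lambda$, so a threshold independent of $b$ would have to be scale-free, which nothing in the RIP argument delivers. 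Consequently your hoped-for repair by enumerating the finitely many candidate supports cannot work --- the obstruction is scale, not support --- and only your other suggestion, a normalization of the data, is viable; the phrase ``depending only on matrix $B$'' in the quoted statement should be read as an imprecision inherited in quotation. Finally, your caution about the conversion $\|x\|_1\le P_a(x)$ is vindicated inside this very paper: the appendix imports from \cite{Zhang_2016DCATL1} the inequality $\sum_{j\ge2}\|h_{T_j}\|_2\le s^{-1/2}P_a(h_{T_0^c})$, which is false without magnitude control (take $h_{T_0^c}$ with $2s$ entries all equal to $H$: the left side is $\sqrt{s}\,H$ while the right side is at most $2\sqrt{s}(a+1)$, since $\rho_a$ saturates at $a+1$), so the same large-$a$ magnitude bookkeeping you flagged is silently needed there as well.
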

\begin{Remark}
If we choose $R=3|T|$ and $a$ goes to $+\infty$, the condition \eqref{eq:penaltycon1} becomes:
\begin{align*}
\delta_{3|T|}+3\delta_{4|T|}<2.
\end{align*}
which is exactly the condition $(1.6)$ of Theorem 1.1 in \cite{Candes_2005error}. This is consistent with the fact that when the parameter $a$ goes to $+\infty$, the penalty function $P_a(x)$ recovers the $\ell_1$ norm.
\end{Remark}

In the rest of this paper, we focus on the main result by applying the TL1 minimization to recover the sparse gPC coefficients in \eqref{eq:gPC_TL1}.

\section{Recovery gPC coefficients via TL1 minimization}\label{sec:TL1_pro}
    In this section, we establish the recovery results for the TL1 minimization problem \eqref{eq:gPC_TL1}. First of all we present the recoveriability for the general TL1 minimization problem \eqref{eq:TL1}. For the noiseless recovery, we improve the estimates in \cite{Zhang_2016DCATL1} and we also set up the noisy recovery.

\begin{Theorem}[Noiseless recovery]\label{thm:noiselessrecover}
Assume that $\delta_{2s}<\frac{1}{1+\frac{a+1}{a}\sqrt2}$, then the solution $\hat{x}$ to \eqref{eq:TL1} satisfies
\begin{equation*}
\|\hat{x}-x\|_2\leq C_0s^{-1/2}P_a(x-x_s),
\end{equation*}
where $C_0$ depends on the isometry constant of $B$ and is defined as
\begin{equation*}
C_0=\frac{(6\sqrt2a-2a+2\sqrt2)\delta_{2s}+2a}{a-((\sqrt2+1)a+\sqrt2)\delta_{2s}}.
\end{equation*}
In particular, if $x$ is $s$-sparse, the recovery is exact.
\end{Theorem}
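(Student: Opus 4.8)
The plan is to transplant the classical restricted-isometry argument of Candès into the transformed setting, replacing the $\ell_1$ norm everywhere by the penalty $P_a$ and the triangle inequality by Lemma \ref{lem:normtrieq}, while the two factors of $\frac{a+1}{a}=\rho_a'(0_+)$ enter through the one-sided comparison $P_a(v)\le\frac{a+1}{a}\|v\|_1$ of the Remark. Write $h=\hat x-x$. Since $x$ is feasible for \eqref{eq:TL1} and $\hat x$ is a minimizer, optimality gives $P_a(\hat x)\le P_a(x)$, and since both vectors satisfy $Bx=b$ the noiseless case yields $Bh=0$. Let $T_0$ be the support of the best $s$-term approximation $x_s$, and partition $T_0^c$ into successive blocks $T_1,T_2,\dots$ of size $s$, ordered by decreasing magnitude of the entries of $h$; write $\Sigma=\sum_{j\ge 2}\|h_{T_j}\|_2$.

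The first step is a transformed cone condition. Starting from $P_a(x)\ge P_a(x+h)$, I would split the sum over $T_0$ and $T_0^c$ and apply the leftmost inequality of Lemma \ref{lem:normtrieq} termwise, namely $\rho_a(|x_i+h_i|)\ge\rho_a(|x_i|)-\rho_a(|h_i|)$ on $T_0$ and $\rho_a(|x_i+h_i|)\ge\rho_a(|h_i|)-\rho_a(|x_i|)$ on $T_0^c$. After cancelling the common term $P_a(x_{T_0})$ this gives
\[
P_a(h_{T_0^c})\le P_a(h_{T_0})+2\,P_a(x-x_s),
\]
the exact analogue of the $\ell_1$ cone inequality, with the tail measured in $P_a$; this is precisely what produces $P_a(x-x_s)$ on the right-hand side of the conclusion.

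The second step is the isometry estimate. Using $Bh=0$ I would write $\|Bh_{T_0\cup T_1}\|_2^2=-\sum_{j\ge 2}\langle Bh_{T_0\cup T_1},Bh_{T_j}\rangle$, bound each cross term by Lemma \ref{lem:delta_2slem} (the blocks are disjoint and each has size at most $s$), combine with $\|h_{T_0}\|_2+\|h_{T_1}\|_2\le\sqrt2\,\|h_{T_0\cup T_1}\|_2$ and the lower bound of Definition \ref{def:RIC}, to obtain $\|h_{T_0\cup T_1}\|_2\le\frac{\sqrt2\,\delta_{2s}}{1-\delta_{2s}}\,\Sigma$. The tail blocks are then controlled by the shelling inequality $\Sigma\le s^{-1/2}\|h_{T_0^c}\|_1$, valid because each entry of $h_{T_j}$ is dominated by the average magnitude over $T_{j-1}$. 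Reconciling the support term with the cone condition through $P_a(h_{T_0})\le\frac{a+1}{a}\|h_{T_0}\|_1\le\frac{a+1}{a}\sqrt s\,\|h_{T_0\cup T_1}\|_2$ feeds everything into a self-referential inequality
\[
\|h_{T_0\cup T_1}\|_2\le\tfrac{a+1}{a}\sqrt2\,\tfrac{\delta_{2s}}{1-\delta_{2s}}\,\|h_{T_0\cup T_1}\|_2+C\,s^{-1/2}P_a(x-x_s).
\]
The hypothesis $\delta_{2s}<\big(1+\tfrac{a+1}{a}\sqrt2\big)^{-1}$ is exactly the statement that the leading coefficient is strictly below $1$, so that term is absorbed; solving and then adding $\|h\|_2\le\|h_{T_0\cup T_1}\|_2+\Sigma$ yields the bound, with $C_0$ emerging from the arithmetic of these constants and reducing to Candès' $\delta_{2s}<\sqrt2-1$ as $a\to\infty$. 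When $x$ is $s$-sparse, $x-x_s=0$, hence $P_a(x-x_s)=0$ and $h=0$, i.e. exact recovery.

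I expect the main obstacle to lie in this last reconciliation rather than in the cone or isometry steps. The penalty $P_a$ is bounded and nonlinear, so it saturates for large entries; consequently $\|\cdot\|_1$ and $\|\cdot\|_2$ of $h$ (which the shelling and isometry steps manipulate) cannot in general be recovered from the $P_a$-based cone condition, and only the one-directional comparison $P_a(\cdot)\le\frac{a+1}{a}\|\cdot\|_1$ is available. The delicate point is to insert this comparison so that the factor $\frac{a+1}{a}$ appears exactly once, alongside $\sqrt2$ and not squared, in both the recovery threshold and the numerator of $C_0$; keeping track of these constants, and ensuring the $\ell_1$-shelling quantities are bridged to the transformed cone condition without loss, is where the bulk of the careful (but essentially routine) bookkeeping will reside.
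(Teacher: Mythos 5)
Your overall architecture coincides with the paper's proof (given in Appendix~\ref{apx:prof_noisyrecover} for the noisy Theorem~\ref{thm:noisyrecover}, of which Theorem~\ref{thm:noiselessrecover} is the $\varepsilon=0$ case): the $P_a$ cone condition \eqref{eq:hconstraint} derived exactly as you do from Lemma~\ref{lem:normtrieq}, the cross-term estimate via Lemma~\ref{lem:delta_2slem}, the $\sqrt2$ from $\|h_{T_0}\|_2+\|h_{T_1}\|_2\le\sqrt2\,\|h_{T_0\cup T_1}\|_2$, and the absorption of the self-referential inequality under $\delta_{2s}<\bigl(1+\frac{a+1}{a}\sqrt2\bigr)^{-1}$. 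But there is a genuine gap at your shelling step. You bound the tail by the classical $\ell_1$ shelling $\sum_{j\ge2}\|h_{T_j}\|_2\le s^{-1/2}\|h_{T_0^c}\|_1$, and to feed this into the rest of the argument you must then dominate $\|h_{T_0^c}\|_1$ by $P_a$-quantities, since the cone condition is stated in $P_a$. No such domination exists: $\rho_a$ saturates at $a+1$, so a single entry of size $t\to\infty$ gives $\|v\|_1=t$ while $P_a(v)\le a+1$; hence there is no constant $C$ with $\|v\|_1\le C\,P_a(v)$, and the available comparison $P_a(v)\le\frac{a+1}{a}\|v\|_1$ points the wrong way. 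You flag exactly this saturation obstacle yourself, but then describe its resolution as ``essentially routine bookkeeping''; in fact the route through $\|h_{T_0^c}\|_1$ cannot be closed at all, and this bridge is the crux of the theorem, not an accounting detail.

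The missing ingredient, which the paper imports from \cite{Zhang_2016DCATL1}, is a shelling inequality native to the penalty itself: $\sum_{j\ge2}\|h_{T_j}\|_2\le s^{-1/2}P_a(h_{T_0^c})$. This is provable because shelling only compares magnitudes of entries across consecutive blocks, and $\rho_a$ is increasing, so the comparison can be carried out on the $\rho_a$-values without ever invoking $\|h_{T_0^c}\|_1$. With it, the cone condition \eqref{eq:hconstraint} plugs in directly, and the factor $\frac{a+1}{a}$ then enters exactly once through inequality $(3.10)$ of \cite{Zhang_2016DCATL1}, namely $s^{-1/2}P_a(h_{T_0})\le\bigl(1+\frac1a\bigr)\|h_{T_0\cup T_1}\|_2$, which is legitimate because on the $s$-sparse block one has $P_a(h_{T_0})\le\frac{a+1}{a}\|h_{T_0}\|_1\le\frac{a+1}{a}\sqrt{s}\,\|h_{T_0}\|_2$ --- precisely the one-directional estimate you do possess, applied where it is valid. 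If you replace your $\ell_1$ shelling by this $P_a$ shelling, the remainder of your outline, including the exact-recovery conclusion when $x-x_s=0$, goes through and reproduces the paper's constant $C_0$ (and $C_1$ in the noisy case).
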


Now, we consider the more applicable situation where the measurements are contaminated by noise. We assume
\begin{equation*}
b=Bx+e,
\end{equation*}
where $e$ is a perturbation vector with $\|e\|_2\leq\varepsilon$ for some $\varepsilon\geq0$. Under such circumstance, we propose the corresponding "\emph{de-noising}" TL1 recovery version:
\begin{align}\label{eq:PaTL1mindenoise}
\min_{x\in\mathbb{R}^N} P_a(x) \quad \text{subject to}\ \|Bx-b\|_2\leq\varepsilon.
\end{align}

In the next theorem, we indicate that stable reconstruction can be obtained by TL1 recovery under the same hypotheses as in Theorem \ref{thm:noiselessrecover}.
\begin{Theorem}[Noisy recovery]\label{thm:noisyrecover}
Assume that $\delta_{2s}<\frac{1}{1+\frac{a+1}{a}\sqrt2}$ and $\|e\|_2\leq\varepsilon$, then the solution $\hat{x}$ to \eqref{eq:PaTL1mindenoise} satisfies
\begin{equation}\label{eq:TL1noisybound}
\|\hat{x}-x\|_2\leq C_0s^{-1/2}P_a(x-x_s)+C_1\varepsilon,
\end{equation}
with $C_0$ the same as in Theorem \ref{thm:noiselessrecover} and $C_1$ given explicitly as
\begin{align*}
C_1=\frac{2(2a+1)\sqrt{1+\delta_{2s}}}{a-((\sqrt2+1)a+\sqrt2)\delta_{2s}}.
\end{align*}
\end{Theorem}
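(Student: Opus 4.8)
My plan is to mirror the restricted-isometry argument behind the noiseless Theorem~\ref{thm:noiselessrecover} (itself modeled on \cite{Candes_2008Rip}), since the noisy statement differs from it in only one place. Put $h=\hat{x}-x$, let $T_0$ index the $s$ entries of $x$ largest in magnitude so that $x_{T_0^c}=x-x_s$, split $T_0^c$ into consecutive blocks $T_1,T_2,\dots$ of size $s$ ordered by decreasing magnitude of the entries of $h$, and write $T_{01}=T_0\cup T_1$. The first point is that $x$ is itself feasible for \eqref{eq:PaTL1mindenoise}: since $\|Bx-b\|_2=\|e\|_2\le\varepsilon$, optimality of $\hat{x}$ gives $P_a(\hat{x})\le P_a(x)$, which is the same inequality that powers the noiseless case. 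Hence the cone estimate obtained from Lemma~\ref{lem:normtrieq}, $P_a(h_{T_0^c})\le P_a(h_{T_0})+2P_a(x_{T_0^c})$, and the ensuing bound on the tail $\sum_{j\ge2}\|h_{T_j}\|_2$ in terms of $s^{-1/2}\|h_{T_{01}}\|_2$ and $s^{-1/2}P_a(x_{T_0^c})$ (using the block structure, the Remark $P_a(\cdot)\le\frac{a+1}{a}\|\cdot\|_1$, and Cauchy--Schwarz) transfer verbatim from the proof of Theorem~\ref{thm:noiselessrecover}, because none of these steps sees the noise.

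The only genuinely new ingredient is the tube constraint. Where the noiseless proof uses $Bh=0$, here both $\hat{x}$ and $x$ satisfy the relaxed constraint, so $\|Bh\|_2\le\|B\hat{x}-b\|_2+\|b-Bx\|_2\le2\varepsilon$. I would then carry out the restricted-isometry step on $T_{01}$ exactly as before, expanding $\|Bh_{T_{01}}\|_2^2=\langle Bh_{T_{01}},Bh\rangle-\sum_{j\ge2}\langle Bh_{T_{01}},Bh_{T_j}\rangle$, bounding the off-diagonal inner products by $\sqrt2\,\delta_{2s}\|h_{T_{01}}\|_2\sum_{j\ge2}\|h_{T_j}\|_2$ through Lemma~\ref{lem:delta_2slem}, and using Definition~\ref{def:RIC} for $(1-\delta_{2s})\|h_{T_{01}}\|_2^2\le\|Bh_{T_{01}}\|_2^2$ together with $\|Bh_{T_{01}}\|_2\le\sqrt{1+\delta_{2s}}\,\|h_{T_{01}}\|_2$. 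The term $\langle Bh_{T_{01}},Bh\rangle$, which vanished in the noiseless proof, is now estimated by Cauchy--Schwarz and the tube as $|\langle Bh_{T_{01}},Bh\rangle|\le2\varepsilon\sqrt{1+\delta_{2s}}\,\|h_{T_{01}}\|_2$; this is exactly the contribution that will surface as the additive $C_1\varepsilon$.

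Dividing by $\|h_{T_{01}}\|_2$ and substituting the cone and tail estimates, I expect to arrive at a scalar inequality whose coefficient of $\|h_{T_{01}}\|_2$ is, after clearing the $\frac{a+1}{a}$ denominators, proportional to $a-((\sqrt2+1)a+\sqrt2)\delta_{2s}$, with the right-hand side carrying a multiple of $a\,s^{-1/2}P_a(x_{T_0^c})$ and a multiple of $(2a+1)\sqrt{1+\delta_{2s}}\,\varepsilon$. The hypothesis $\delta_{2s}<\big(1+\frac{a+1}{a}\sqrt2\big)^{-1}$ is precisely the condition that renders $a-((\sqrt2+1)a+\sqrt2)\delta_{2s}$ strictly positive, so the inequality can be solved for $\|h_{T_{01}}\|_2$; feeding the result into $\|h\|_2\le\|h_{T_{01}}\|_2+\sum_{j\ge2}\|h_{T_j}\|_2$ and collecting terms yields \eqref{eq:TL1noisybound} with the stated $C_0$ and $C_1$. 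Setting $\varepsilon=0$ collapses the argument back to Theorem~\ref{thm:noiselessrecover}, a useful internal consistency check, while letting $a\to\infty$ reduces $C_1$ to the classical $\ell_1$ constant of \cite{Candes_2008Rip}.

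I expect the main obstacle to lie not in the noise term, which enters linearly and cleanly, but in the non-homogeneity of the TL1 functional inherited from the noiseless analysis: because $P_a$ is not a norm, each passage between $P_a$ and the $\ell_1$/$\ell_2$ norms in the cone and tail steps can only be made through the one-sided bound $P_a(\cdot)\le\frac{a+1}{a}\|\cdot\|_1$, and the careful bookkeeping of these $\frac{a+1}{a}$ factors is what fixes both the admissible range of $\delta_{2s}$ and the exact form of the constants. The specifically new check is that introducing noise does not tighten the isometry requirement: since the tube contributes only the factor $\sqrt{1+\delta_{2s}}$ and does not interact with the cross-term constant $\sqrt2\,\delta_{2s}$, the positivity threshold for the denominator is identical to the noiseless one, which is exactly why Theorem~\ref{thm:noisyrecover} holds under the same hypothesis as Theorem~\ref{thm:noiselessrecover}.
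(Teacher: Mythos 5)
Your proposal is correct and follows essentially the same route as the paper's proof in Appendix~\ref{apx:prof_noisyrecover}: the identical block decomposition $T_0,T_1,T_2,\dots$, the cone constraint $P_a(h_{T_0^c})\leq P_a(h_{T_0})+2P_a(x_{T_0^c})$ via Lemma~\ref{lem:normtrieq}, the tail bound $\sum_{j\geq2}\|h_{T_j}\|_2\leq s^{-1/2}P_a(h_{T_0^c})$ together with the $(1+\frac{1}{a})$ factor, and the tube estimate $\|Bh\|_2\leq2\varepsilon$ feeding $|\langle Bh_{T_{01}},Bh\rangle|\leq2\varepsilon\sqrt{1+\delta_{2s}}\,\|h_{T_{01}}\|_2$, with the same constants $\alpha$, $\beta$ emerging and the same positivity threshold for the denominator. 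Your only (immaterial) difference in framing is that you adapt the noiseless argument to the noisy setting, whereas the paper proves the noisy bound directly and obtains Theorem~\ref{thm:noiselessrecover} as the $\varepsilon=0$ special case.
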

The proof can be found in Appendix \ref{apx:prof_noisyrecover}. Theorem \ref{thm:noiselessrecover} can be proved by the first part of \eqref{eq:TL1noisybound}. Noting that, from now on, $x_T$ refers to the vector that equals to $x$ on an index set $T$ and zero elsewhere.

\begin{Remark}
The results here are more elegant than that in \cite{Zhang_2016DCATL1} recalled in Theorem \ref{thm:oldthm}. In \cite{Yan_2015L12}, the authors studied the $\ell_{1-2}$ minimization and they discussed the properties of sparse approximation when using $\ell_{1-2}$ minimization. In their work, they derived an estimate of $\delta_{2s}<\frac{1}{1+\sqrt2+\frac{2\sqrt2}{\sqrt{s}-1}}$ for non-sparse solutions. While, in this paper, when we use the TL1 minimization as the CS solver, we derive that $\delta_{2s}<\frac{1}{1+\frac{a+1}{a}\sqrt2}$, hence if $a>\frac{\sqrt{s}-1}{2}$, then the recoverability of TL1 is better than that of $\ell_{1-2}$.
\end{Remark}

We obtain the following important properties of the recovery analysis of gPC coefficients via TL1 minimization.
\subsection{Recovery properties}
According to \cite{Rauhut_2012sparseLegen}, we first recall the definition of a bounded orthonormal system, if a orthogonal polynomial system is uniformly bounded,
\begin{align}\label{eq:bounnedsystem}
\sup_{n}\|\Phi_n\|_{\infty}=\sup_n\sup_{z}|\Phi_n(z)|\leq K \quad (K\geq1).
\end{align}

\begin{Theorem}[RIP for bounded orthonormal systems \cite{Rauhut_2012sparseLegen}]\label{thm:boundedBOS}
Let $A\in\mathbb{R}^{M\times N}$ be the interpolation matrix with entries $\{a_{j,n}=\Phi_n(z^j)\}_{1\leq n\leq N,1\leq j\leq M}$ from \eqref{eq:matrixelem}, where $\{\Phi_n\}$ is a bounded orthonormal system satisfying \eqref{eq:bounnedsystem}. Assume that
\begin{equation*}
M\geq C\delta^{-2}K^2s\log^3(s)\log(N),
\end{equation*}
then with probability at least $1-N^{-\gamma\log^3(s)}$, the RIC $\delta_s$ of $1/\sqrt{M}A$ satisfies $\delta_s\leq\delta$. Here the $C,\gamma>0$ are universal constants.
\end{Theorem}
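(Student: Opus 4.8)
The statement is a standard but deep concentration result from compressive sensing, so my plan is to follow the empirical-process route underlying the Rudelson--Vershynin and Rauhut analyses rather than to attempt an elementary argument. The first step is to recast the restricted isometry constant of $B=\frac{1}{\sqrt{M}}A$ as the supremum of a centered empirical process. Writing $\phi(z)=(\Phi_1(z),\ldots,\Phi_N(z))^T$ for the random feature vector (so that the rows of $A$ are the samples $\phi(z^j)^T$), the orthonormality \eqref{eq:orthogonal} gives $\mathbb{E}\,|\langle\phi(z),x\rangle|^2=\|x\|_2^2$, and hence
\[
\delta_s=\sup_{\|x\|_0\leq s,\;\|x\|_2=1}\left|\frac{1}{M}\sum_{j=1}^{M}|\langle\phi(z^j),x\rangle|^2-\|x\|_2^2\right|.
\]
Thus $\delta_s$ is exactly the deviation of an empirical second moment from its mean, uniformly over the (non-convex) set of $s$-sparse unit vectors.

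The second step is to control the expectation $\mathbb{E}\,\delta_s$. I would symmetrize, introducing independent Rademacher signs $\sigma_j$, to reduce matters to bounding $\mathbb{E}\,\sup_x\frac{1}{M}\big|\sum_{j}\sigma_j|\langle\phi(z^j),x\rangle|^2\big|$. The two structural inputs are the uniform bound \eqref{eq:bounnedsystem}, $\sup_n\|\Phi_n\|_\infty\leq K$, which makes each summand sub-exponential at the correct scale, and the fact that an $s$-sparse $x$ only sees $s$ coordinates of $\phi$. Applying Dudley's entropy integral (or generic chaining for the sharp constants) to the natural metric on the set of sparse unit vectors, together with an estimate of the covering numbers of that set, yields a bound of the form $\mathbb{E}\,\delta_s\lesssim\max(\eta,\eta^2)$ with $\eta\simeq K\sqrt{s\log^3(s)\log(N)/M}$. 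Choosing $M$ as in the hypothesis forces $\eta\lesssim\delta$, hence $\mathbb{E}\,\delta_s\lesssim\delta$, which explains the $\delta^{-2}K^2$ scaling.

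The third step is to pass from this expectation bound to the stated high-probability conclusion. Here I would invoke Talagrand's concentration inequality for suprema of empirical processes (equivalently, a Bernstein-type deviation bound for the bounded process at hand). Since the summands are controlled in terms of $K^2 s$ and the process has small variance, the deviation of $\delta_s$ above its mean concentrates, and once $M$ meets the threshold the tail decays like $N^{-\gamma\log^3(s)}$, giving $\delta_s\leq\delta$ on the asserted event.

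The main obstacle is the second step: extracting the precise poly-logarithmic factor $\log^3(s)\log(N)$ rather than a cruder $\log^2(N)$ or a higher power of $\log$. A direct matrix-Bernstein argument is far easier but loses logarithmic factors; the sharp exponent requires the delicate chaining analysis in which one simultaneously exploits the $\ell_\infty$ boundedness $K$ and the sparsity to balance the entropy integral. Since the result is quoted verbatim as Theorem \ref{thm:boundedBOS} from \cite{Rauhut_2012sparseLegen}, I would ultimately cite that chaining argument for the logarithmic bookkeeping rather than reproduce it here.
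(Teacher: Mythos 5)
The paper does not prove this theorem at all: it is quoted verbatim, with proof deferred to \cite{Rauhut_2012sparseLegen}, so your decision to outline the empirical-process route (recasting $\delta_s$ as a supremum of a centered empirical second moment, symmetrization, Dudley/generic chaining over the set of $s$-sparse unit vectors, then a Talagrand-type deviation inequality) and to cite the source for the delicate $\log^3(s)\log(N)$ bookkeeping is both accurate and exactly aligned with how the paper handles the statement. Your sketch faithfully reflects the actual Rudelson--Vershynin/Rauhut argument behind the cited result, so there is nothing to correct.
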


Based on the above theorem and combine the previous recovery properties of TL1 minimization, we can obtain the following key theorem that plays an essential role in reconstructing gPC Legendre approximation to the unknown function. This part generally follows the lines of \cite{Yan_2012Sc, GuoLiu_2017Lq} and we define the basis set $T_P^d$ to be orthonormal polynomials up to degree $P$, i.e., $T_P^d = \{\Phi_\alpha : |\alpha|\leq P\}$.

\begin{Theorem}[Recovery of  polynomial functions]\label{thm:poly_recover}
Let $A$ be the Vandermonde-like measurement matrix established using the Legendre polynomials in $T_{P}^d$ and $M$ sampling points $\{\xi_i\}_{i=1}^M$ from the uniform measure. Suppose $d\geq P$ and
\begin{align}
M\geq C\delta^{-2}3^Ps\log^3(2s)\log(N),
\end{align}
where $\delta<\frac{1}{1+\frac{a+1}{a}\sqrt2}$ and $C>0$ is universal. Let the target function be a Legendre series as \eqref{eq:truncatePCE}, i.e.,
\begin{align*}
f(z)=\sum_{n=1}^N{x}_n\Phi_n(z),
\end{align*}
with an arbitrary coefficient vector $x\in\mathbb{R}^N$, $N=\dim T_P^d$, and $x_s$ be its truncated vector corresponding to the $s$ largest entries (in absolute value). Then, with a probability exceeding $1-N^{-\gamma\log^3(2s)}$, the following results hold.
\begin{itemize}
  \item Let $b=Ax$, then the solution $\hat{x}$ to \eqref{eq:gPC_TL1} satisfies
  \begin{align*}
  \|\hat{x}-x\|_2\leq C_0s^{-1/2}P_a(x-x_s),
  \end{align*}
  \item Let $b=Ax+e$, where $e\in\mathbb{R}^M$ is any perturbation with $\|e\|_2\leq\varepsilon$, then the
solution $\hat{x}$ to \eqref{eq:PaTL1mindenoise} satisfies
\begin{equation}
\|\hat{x}-x\|_2\leq C_0s^{-1/2}P_a(x-x_s)+C_1\varepsilon.
\end{equation}
where ${C}_0$ and ${C}_1$ are defined in  \eqref{eq:TL1normalC}.
\end{itemize}
\end{Theorem}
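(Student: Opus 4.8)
The plan is to recognize this statement as an assembly of three results already in hand rather than a fresh estimate: the restricted isometry bound for bounded orthonormal systems (Theorem \ref{thm:boundedBOS}) and the two abstract TL1 recovery guarantees (Theorems \ref{thm:noiselessrecover} and \ref{thm:noisyrecover}). The first concrete task is to pin down the uniform bound $K$ of the multivariate Legendre basis so that Theorem \ref{thm:boundedBOS} applies. Writing the tensor-product orthonormal basis as $\Phi_\alpha(z)=\prod_{i=1}^d\Phi_{\alpha_i}(z_i)$ and using the classical endpoint value $\|\Phi_n\|_\infty=\Phi_n(1)=\sqrt{2n+1}$ of the univariate Legendre polynomials normalized against the uniform measure, I obtain $\|\Phi_\alpha\|_\infty=\prod_{i=1}^d\sqrt{2\alpha_i+1}$, whence $K^2=\sup_{|\alpha|\le P}\prod_{i=1}^d(2\alpha_i+1)$. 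The elementary inequality $2k+1\le 3^k$ for every integer $k\ge 0$ gives $\prod_i(2\alpha_i+1)\le 3^{|\alpha|}\le 3^P$, and this value is attained by the multi-index with $P$ entries equal to $1$, which is admissible precisely because $d\ge P$. This is exactly where the hypothesis $d\ge P$ is consumed, and it yields $K^2=3^P$.

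With $K^2=3^P$ established, I would invoke Theorem \ref{thm:boundedBOS} at sparsity level $2s$ rather than $s$, since the recovery theorems require control of $\delta_{2s}$. That theorem then guarantees that whenever $M\ge C\delta^{-2}3^P(2s)\log^3(2s)\log(N)$—with the harmless factor $2$ absorbed into the universal constant $C$, producing the stated sampling condition—the normalized matrix $B:=\frac{1}{\sqrt M}A$ has $\delta_{2s}\le\delta$ with probability at least $1-N^{-\gamma\log^3(2s)}$. Since the hypothesis imposes $\delta<\frac{1}{1+\frac{a+1}{a}\sqrt2}$, on this event one has $\delta_{2s}\le\delta<\frac{1}{1+\frac{a+1}{a}\sqrt2}$, which is exactly the isometry assumption required by both Theorem \ref{thm:noiselessrecover} and Theorem \ref{thm:noisyrecover}.

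It then remains to transport the abstract bounds back to the unnormalized system. In the noiseless case the feasible set is scale-invariant: $Ax=b$ is equivalent to $Bx=b/\sqrt M$, so the TL1 minimizer of \eqref{eq:gPC_TL1} is identical for the two systems, and since $b/\sqrt M=Bx$ the conclusion of Theorem \ref{thm:noiselessrecover} applied to $B$ gives $\|\hat x-x\|_2\le C_0 s^{-1/2}P_a(x-x_s)$ verbatim. In the noisy case, with $b=Ax+e$ and $\|e\|_2\le\varepsilon$, the constraint $\|Ax-b\|_2\le\varepsilon$ rewrites as $\|Bx-b/\sqrt M\|_2\le\varepsilon/\sqrt M$, where $b/\sqrt M=Bx+e/\sqrt M$ and $\|e/\sqrt M\|_2\le\varepsilon/\sqrt M$; applying Theorem \ref{thm:noisyrecover} with effective noise level $\varepsilon/\sqrt M$ yields the stated estimate with the rescaled constants of \eqref{eq:TL1normalC} (namely $C_1$ divided by $\sqrt M$, while $C_0$ is unchanged).

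I expect the only genuinely delicate points to be the normalization bookkeeping in the noisy step—tracking how the factor $1/\sqrt M$ migrates into the effective $\varepsilon$ and into $C_1$—together with the verification that the worst-case Legendre $L^\infty$ bound collapses to exactly $3^P$ under $d\ge P$. Everything else is a direct composition of Theorem \ref{thm:boundedBOS} with the two recovery theorems, so no additional analytic estimate is needed.
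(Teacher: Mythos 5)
Your proposal is correct and follows essentially the same route as the paper: the paper's proof is precisely the two-step composition of Theorem \ref{thm:boundedBOS} (applied at sparsity level $2s$, with $K^2=3^P$ for the tensorized Legendre system under $d\geq P$) with Theorems \ref{thm:noiselessrecover} and \ref{thm:noisyrecover}. The details you supply---the derivation of $K^2=3^P$ via $2k+1\leq 3^k$, the factor $2$ absorbed into the universal constant $C$, and the $1/\sqrt{M}$ normalization bookkeeping (which in the noisy case in fact yields the slightly stronger effective constant $C_1/\sqrt{M}\leq C_1$)---are exactly the steps the paper's two-sentence proof leaves implicit, so yours is the same argument, only more carefully justified.
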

\begin{proof} Since $M\geq C\delta^{-2}3^Ps\log^3(s)\log(N)$, we can obtain from Theorem \ref{thm:boundedBOS} directly that the restricted isometry constant $\delta_s$ of $\frac{1}{\sqrt{M}}A$ satisfies $\delta\leq\delta_{2s}\le \frac{1}{1+\frac{a+1}{a}\sqrt2}$. The proof then follows immediately from Theorems \ref{thm:noiselessrecover} and \ref{thm:noisyrecover}.
\end{proof}
\begin{Remark}
Essentially, the errors in the Theorem \ref{thm:poly_recover} can be bounded by $Cs^{-1/2}\|x-x_s\|_1$ with some constant $C$.
\end{Remark}

\subsection{Recover algorithm for constrained TL1 minimization}
In this section, we propose the algorithm to solve the constrained TL1 minimization problem using the DCA framework, namely,
\begin{align*}
\begin{split}
&\min_{x\in\mathbb{R}^N} P_a(x) \ \text{subject to}\ Ax=b, \\
&\Leftrightarrow \\
&\min_{x\in\mathbb{R}^N}\frac{a+1}{a}\|x\|_1-\{\frac{a+1}{a}\|x\|_1-P_a(x)\} \ \text{subject to}\ Ax=b.
\end{split}
\end{align*}
First, we recall the original DCA-TL1 algorithm in Algorithm \ref{alg_1} which has been introduced in \cite{Zhang_2016DCATL1} and here $shrink(\cdot,\cdot)$ is given by:
\begin{align*}
shrink(x,r)_i \triangleq sgn(x_i)\max\{|x_i|-r,0\},
\end{align*}
where $sgn(t)\triangleq \left\{\begin{array}{ll}1, & t>0;\\ 0, &t=0;\\ -1, &t<0.\end{array}\right.$

\begin{algorithm}\label{alg_1}
\SetAlgoNoLine
\caption{DCA-TL1 (pseudo-code)}
Prescribe $\epsilon_{outer}>0, \epsilon_{inner}>0$, $\delta \gg 1$ and set $x^0=0, x^1\neq0$, $n=1$ \\
\While {$\|x^n-x^{n-1}\|>\epsilon_{outer}$}{
$z=(z_1, \ldots, z_N)$, where $z_i=\frac{a+1}{a}sgn(x^n_i)-\frac{(a+1)sgn(x^n_i)}{a+|x^n_i|}+\frac{(a+1)x^n_i}{(a+|x^n_i|)^2}$
$x_0=0, x_1=x^n, i=1, y_i=x_i, u_i=v_i=0$ \\
\While{$\|x_{i}-x_{i-1}\|>\epsilon_{inner}$}{$x_{i+1}=(A^{\top}A+I)^{-1}(A^{\top}b+y_i+\frac{z-u_i-A^{\top}v_i}{\delta})$\\
$y_{i+1}=shrink(x_{i+1}+\frac{u_i}{\delta}, \frac{a+1}{a\delta})$\\
$u_{i+1}=u_i+\delta(x_{i+1}-y_{i+1})$\\
$v_{i+1}=v_i+\delta(Ax_{i+1}-b)$}
$n=n+1$\\
$x^n=x_i$
}
$x = x^n$
\end{algorithm}

It is worth noting that in the above DCA-TL1 algorithm, the parameter $a$ is also very important. As shown in Figure \ref{fig:contours}, when $a$ tends to zero, the penalty function approaches the $\ell_0$ norm; If $a$ goes to $\infty$, the objective function will be more convex and act like the $\ell_1$ norm. Therefore, choosing an appropriate $a$ will improve the effectiveness and success rate of our algorithm. In this paper, we select parameter $a$ through testing DCA-TL1 on recovering sparse vectors with different values of $a$, varying among $\{0.1\ 0.2\ 0.3\ 1\ 2\ 10\}$. According to the work in \cite{Zhang_2016DCATL1}, we do the similar tests using the bounded orthonormal systems to choose the optimal parameter $a$. Figure \ref{fig:varyinga} shows the computed results with different values of $a$, we see that DCA-TL1 with $a=0.3$ is the best among all tested values for recovering the sparse functions. Thus for the following tests in recovering sparse functions, we set parameter $a=0.3$.

\begin{figure}[!ht]
\centering
\includegraphics[width=0.48\textwidth,height=0.23\textheight]{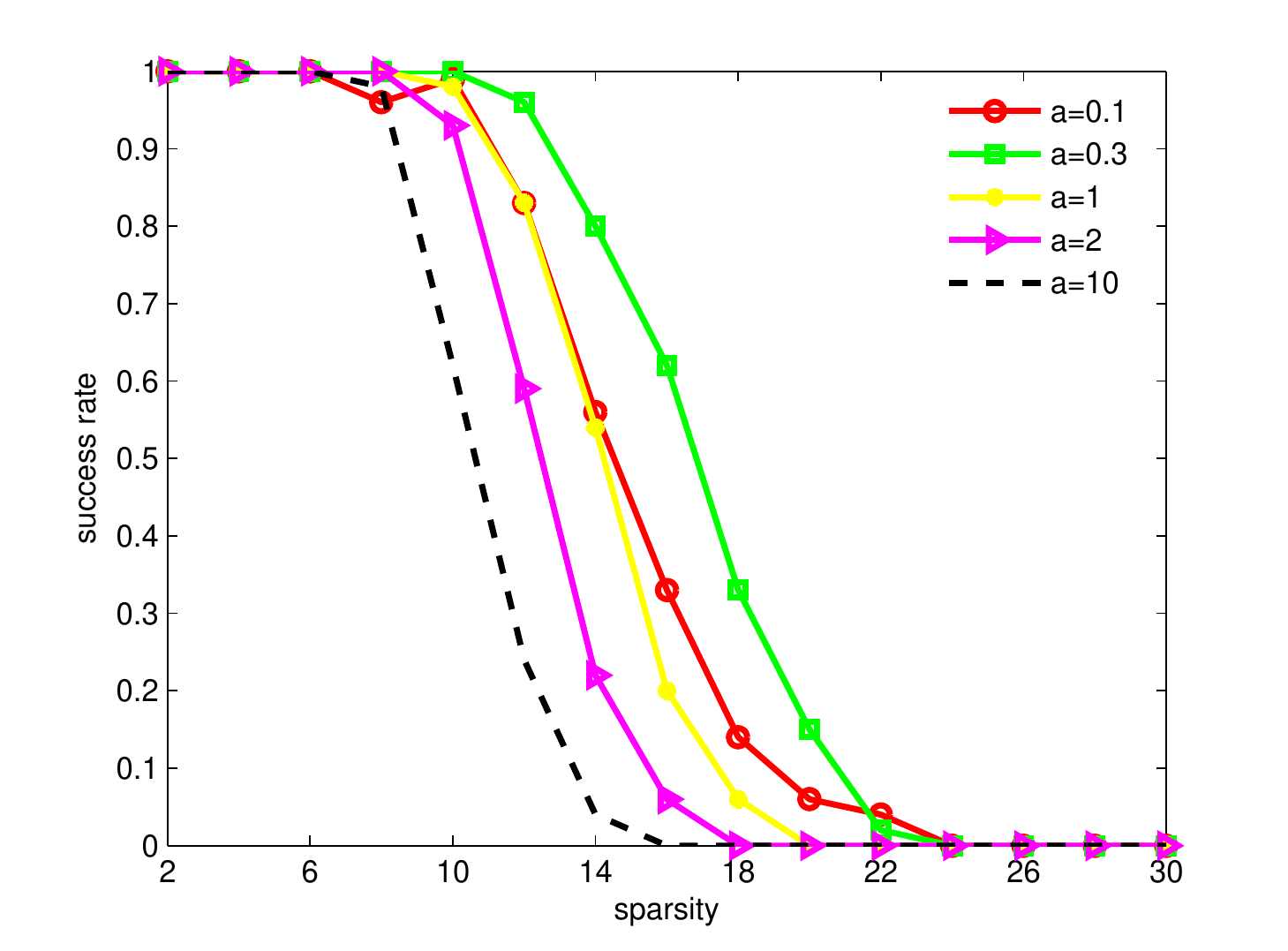}\quad
\includegraphics[width=0.48\textwidth,height=0.23\textheight]{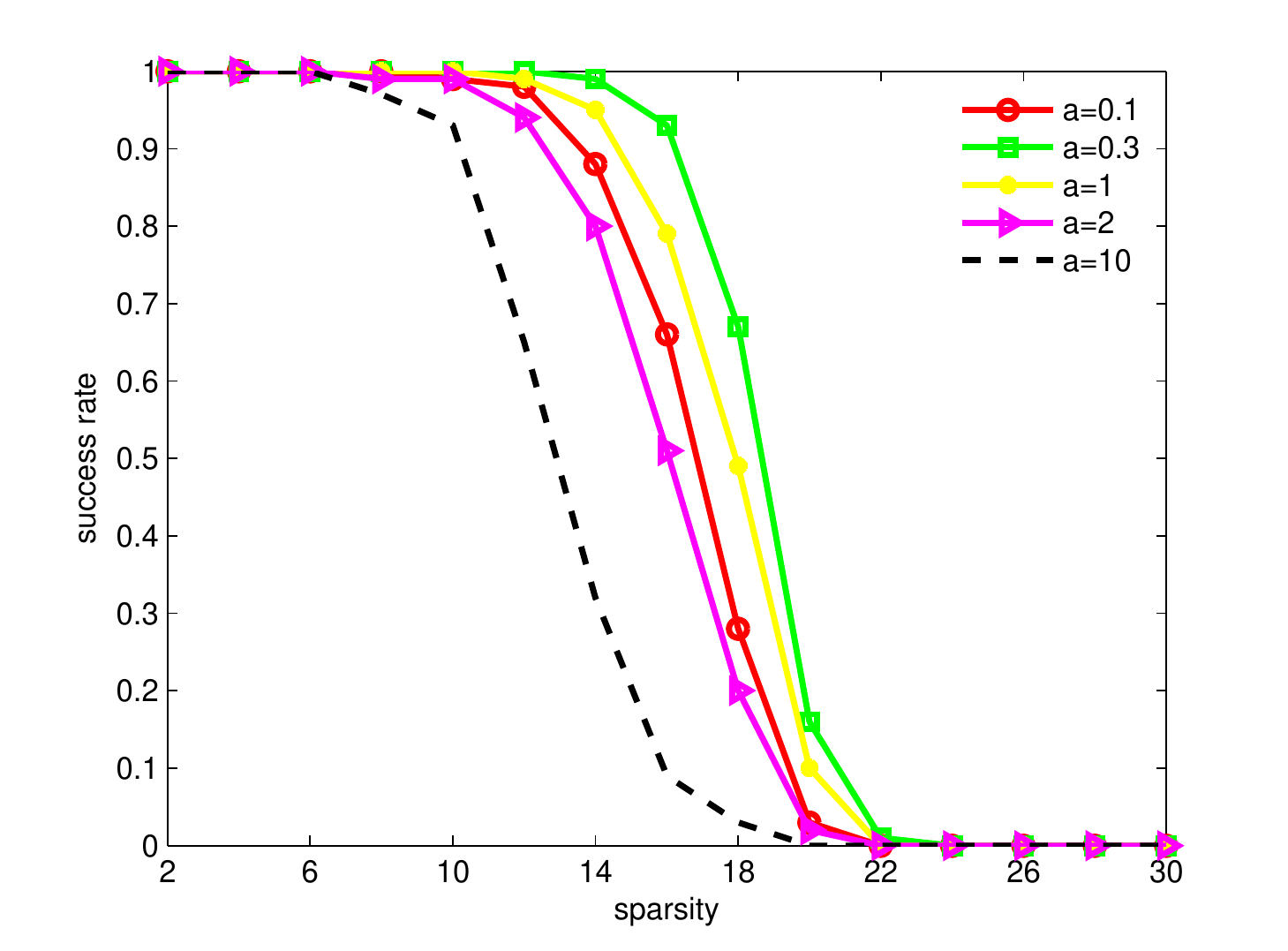}
\caption{Success  rate against sparsity $s$ for varying parameter $a$ using Legendre polynomials. Left: $d=2$, $k=20$, $N=231$, $M=45$; Right: $d=10$, $k=4$, $N=1001$, $M=70$}\label{fig:varyinga}
\end{figure}

If the target function is non-sparse, due to the various types of these functions, we can not easily prescribe an appropriate value of $a$. Hence, we develop an adaptive DCA-TL1 minimization algorithm, that is, prescribing a set of candidate values of $a$ solving the TL1 minimization for each candidate and selecting the sparsest solution as the solution to problem \eqref{eq:gPC_TL1}. The theoretical analysis in Section \ref{sec:TL1_pro} indicates that if $a$ tends to zero we gets sparser solutions, and the numerical tests in Figure \ref{fig:varyinga} also show that. Hence, in the later numerical examples, we choose $c = [0.2, 0.3, 1]$ as the candidate set of $a$ for low dimensional cases and $c = [0.05, 0.1, 0.2, 0.3, 1]$ as the candidate set of $a$ for more complicate high dimensional cases. We perform the algorithm 1 for each value of $a$ from the candidate set and retain the sparest solution $x$. The pseudo-code is given in Algorithm \ref{alg_2}.

\begin{algorithm}\label{alg_2}
\SetAlgoNoLine
\caption{Adaptive DCA-TL1 for non-sparse recovery}
Define $Sparsity=N$, given a vector $c$, $L=\text{length}(c)$ \\
\For{l=1:L}{
$a_l=c(l)$;\\
$xt=\bar{x}$, where $\bar{x}$ is obtained from Algorithm \ref{alg_1} with a fixed parameter $a = a_l$;\\
$s$=length(find($|xt|>1e-6$));\\
\If {$s<Sparsity$}{
 $Sparsity=s$;\\
 $x=xt$;\\
}
}
\end{algorithm}

\section{Numerical examples}
In this section, we present some numerical examples to demonstrate the theoretical findings of the TL1 minimization for uncertainty quantification. We proceed the numerical tests in two ways. On the one hand, assuming that the target functions are sparse polynomial expansions with arbitrarily chosen sparse coefficients we focus on the successful recovery probability. On the other hand, given that the target functions are non-sparse in general we examine the approximation errors. Meanwhile, to demonstrate the efficiency of TL1 algorithm for high-dimensional polynomial approximation, we conduct two experiments in each example, one for low dimensional problem and the other for high dimensional problem. In all implementations, $A$ is a bounded orthonormal system with each component $a_{ij}$ generated by uniform distributed samples on $[-1, 1]^d$ and the corresponding Legendre polynomials. Furthermore, for each case we repeat 100 times to obtain an average result to reduce the statistical oscillations.

We employ the algorithm \ref{alg_2} to solve the TL1 minimization problem \eqref{eq:gPC_TL1}. In both cases, we also perform the DCA-$\ell_{1-2}$ minimization established in \cite{Lou_2015ComputeL12} and the standard $\ell_1$ minimization using SPGL1 package \cite{Berg_2007spgl} for comparison. Other optimization algorithms can be employed for TL1-minimization, c.f. \cite{ZhangXin2017a,ZhangXin2017b}.

\subsection{Exact recovery for sparse functions}
In this example, we choose a sparsity level $s$ and $x^{\ast}$ to be a randomly generated sparse vector with sparsity $s$, i.e., the components of $x^{\ast}$ have $s$ non-zero values drawn from a standard (namely, zero mean and unit variance) Gaussian distribution, while the rest components are zero. This $s$-sparse vector $x^{\ast}$ is set to be the coefficients of the target polynomial expansion, i.e.,
\begin{equation*}
f(y) = \sum_{i=1}^N x^{\ast}_i\Phi_i(y),
\end{equation*}
and we seek to recover the target coefficients via TL1 minimization. In the following experiments, a recovery is considered successful if the resulting coefficient vector $x$ satisfies $\|x-x^{\ast}\|_{\infty}<10^{-3}$.

\subsubsection{Low-dimensional case when $d=2$} We first examine the performance of the methods in a relatively low dimension of $d=2$, the polynomial space $T_k^d$ is fixed at order $k=20$ and the number of total unknowns $N=\dim T_k^d=231$.

\begin{figure}[!ht]
\centering
\includegraphics[width=0.48\textwidth,height=0.23\textheight]{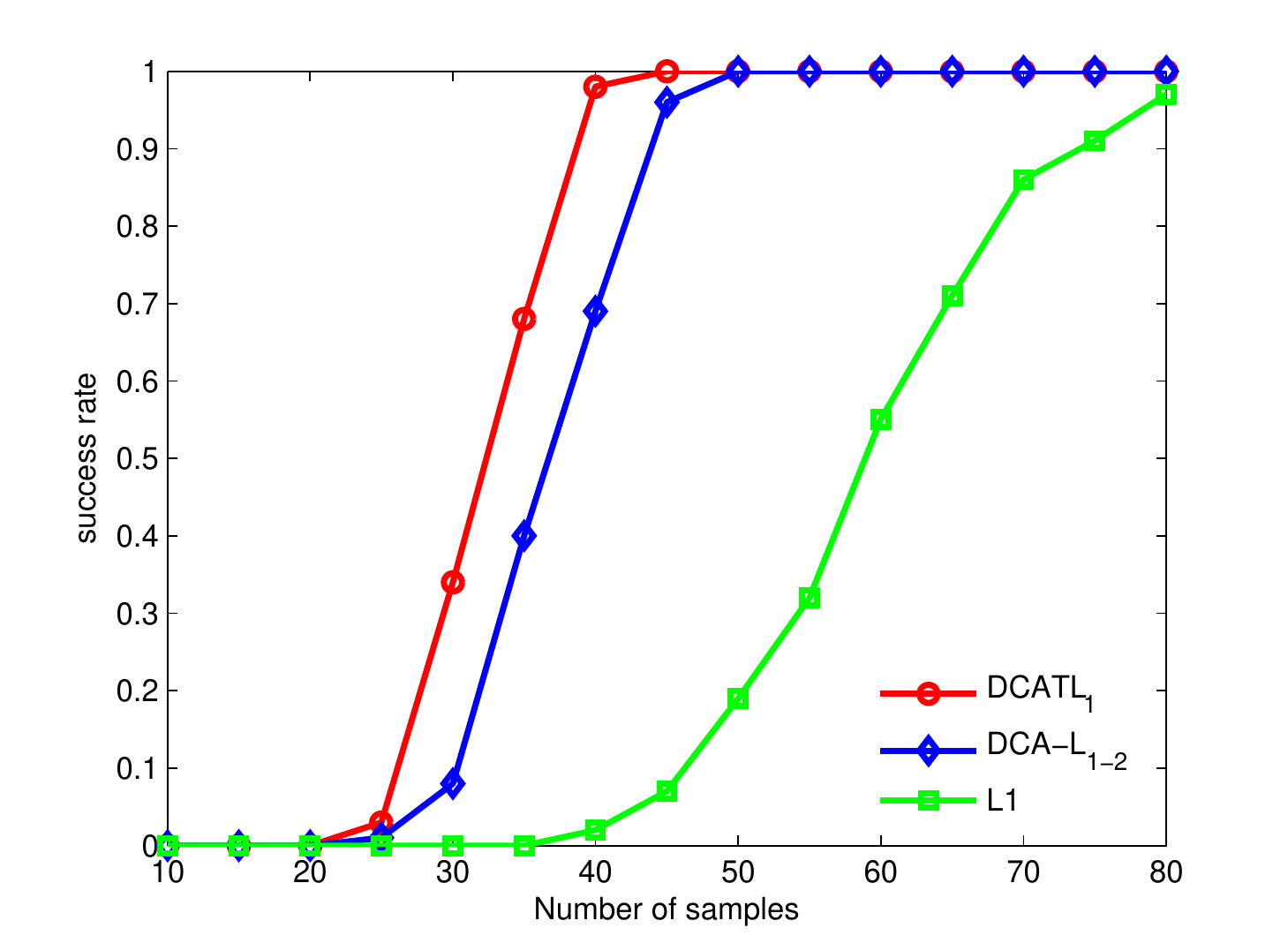}\quad
\includegraphics[width=0.48\textwidth,height=0.23\textheight]{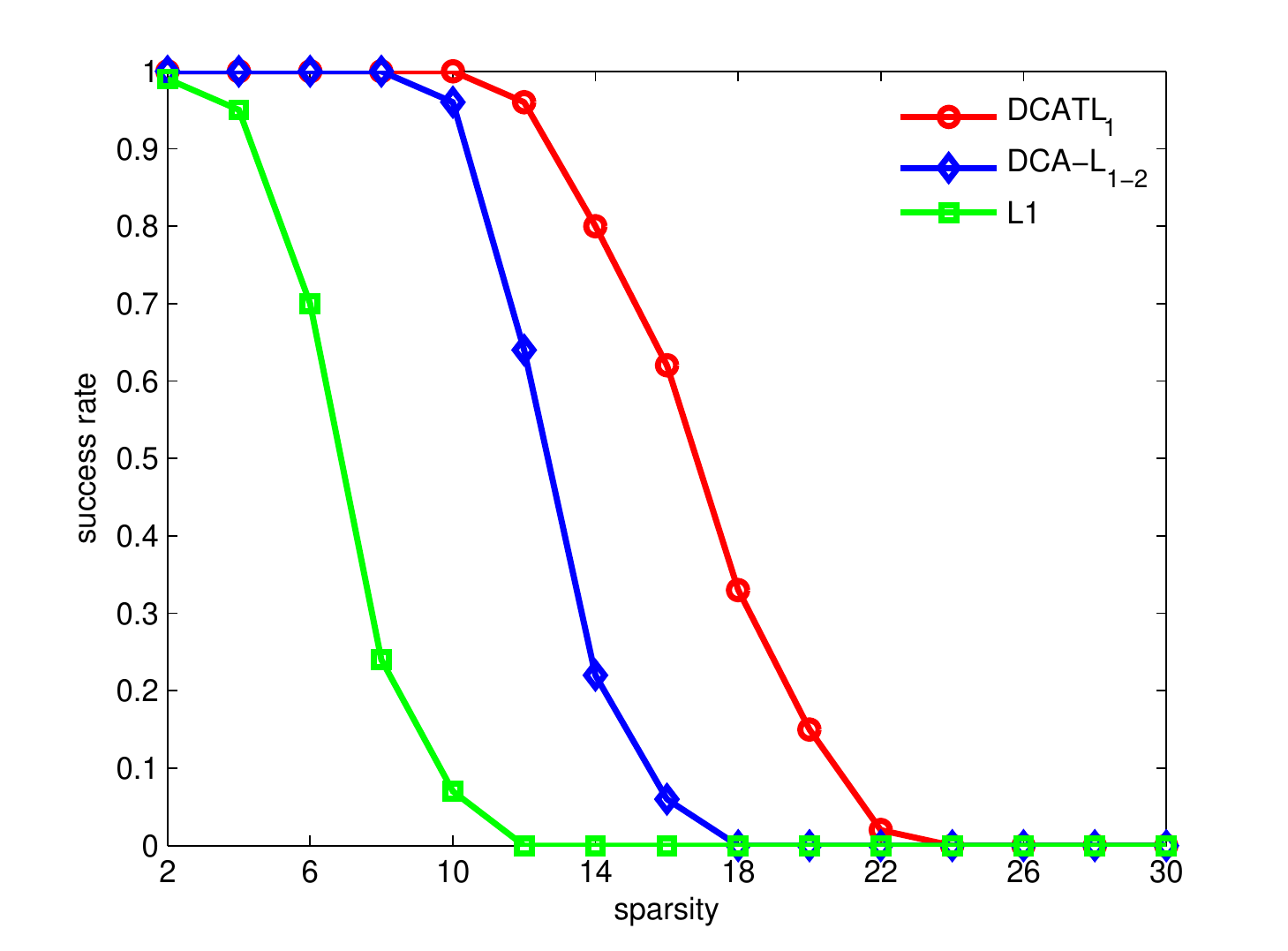}
\caption{Recovery results in $d=2$, $k=20$, $N=231$. Left: Probability of successful recovery vs. number of samples with $s=10$; Right: Probability of successful recovery vs. sparsity with $M=45$.}\label{fig:lowdimtests}
\end{figure}

The probability of successful recovery with respect to the number of samples $(M)$ is displayed in Figure \ref{fig:lowdimtests}(left), for a fixed sparsity level $s=10$. We observe that the DAC-TL1 minimization performs the best and DCA-$\ell_{1-2}$ is better than the standard $\ell_1$ minimization. Figure \ref{fig:lowdimtests}(right) shows the success rate as a function of sparsity $s$. Similarly as before, we observe the best performance by the DCA-TL1 minimization, and the next by the DCA-$\ell_{1-2}$ minimization.

\subsubsection{High-dimensional examples when $d=10$}
We fix the polynomial space $T_k^d$ at $k=4$, which results in $N=\dim T_k^d=1001$. Results of these simulations are given in Figure \ref{fig:highdimtests}.

\begin{figure}[!ht]
\centering
\includegraphics[width=0.48\textwidth,height=0.23\textheight]{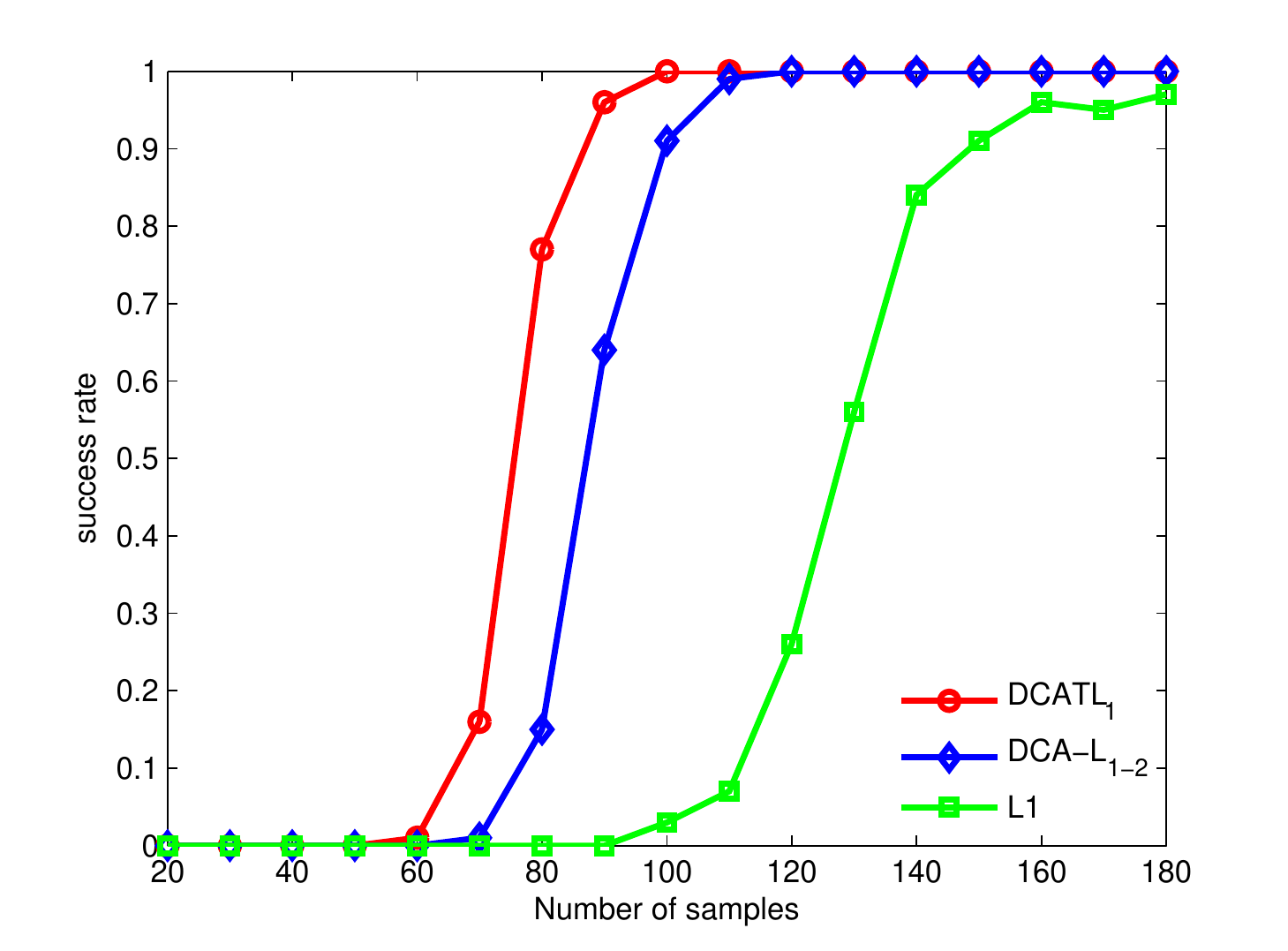}\quad
\includegraphics[width=0.48\textwidth,height=0.23\textheight]{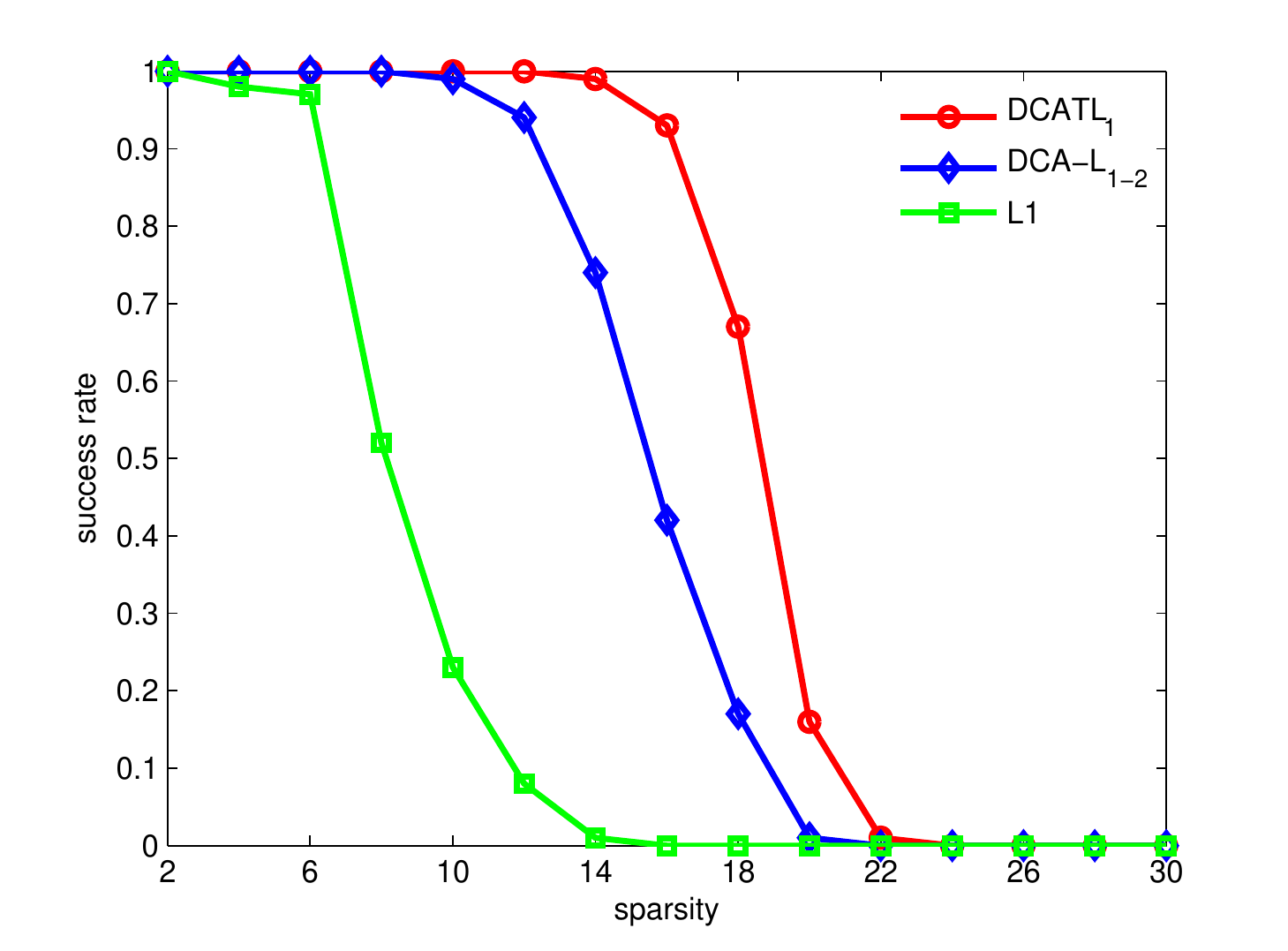}
\caption{Recovery results in $d=10$, $k=4$, $N=1001$. Left: Probability of successful recovery vs. number of samples with $s=20$; Right: Probability of successful recovery vs. sparsity with $M=70$.}\label{fig:highdimtests}
\end{figure}

In Figure \ref{fig:highdimtests}, the plot on the left shows the recovery probabilities with respect to the number of sample points at a fixed sparsity level $s=20$, while the plot on the right provides recovery rate versus the sparsity level with a fixed number of samples $(M=70)$. Again, for this higher dimensional test, the DCA-TL1 minimization outperforms the other two algorithms and DCA-$\ell_{1-2}$ performs better than standard $\ell_1$ minimization.

\subsection{Analytical function approximations}
In this part we implement the numerical tests for approximating general non-sparse functions. We use the discrete $\ell_2$ error to measure the performance. To be specific, given a set of $Q=2000$ random samples $\{z^i\}_{i=1}^Q$ drawn from the density $w$ we evaluate the true function $f(z^i)$ and the gPC approximation $\tilde{f}(z^i)$ obtained from solving the TL1 minimization and compute the error

\begin{figure}[!ht]
\centering
\includegraphics[width=0.48\textwidth,height=0.23\textheight]{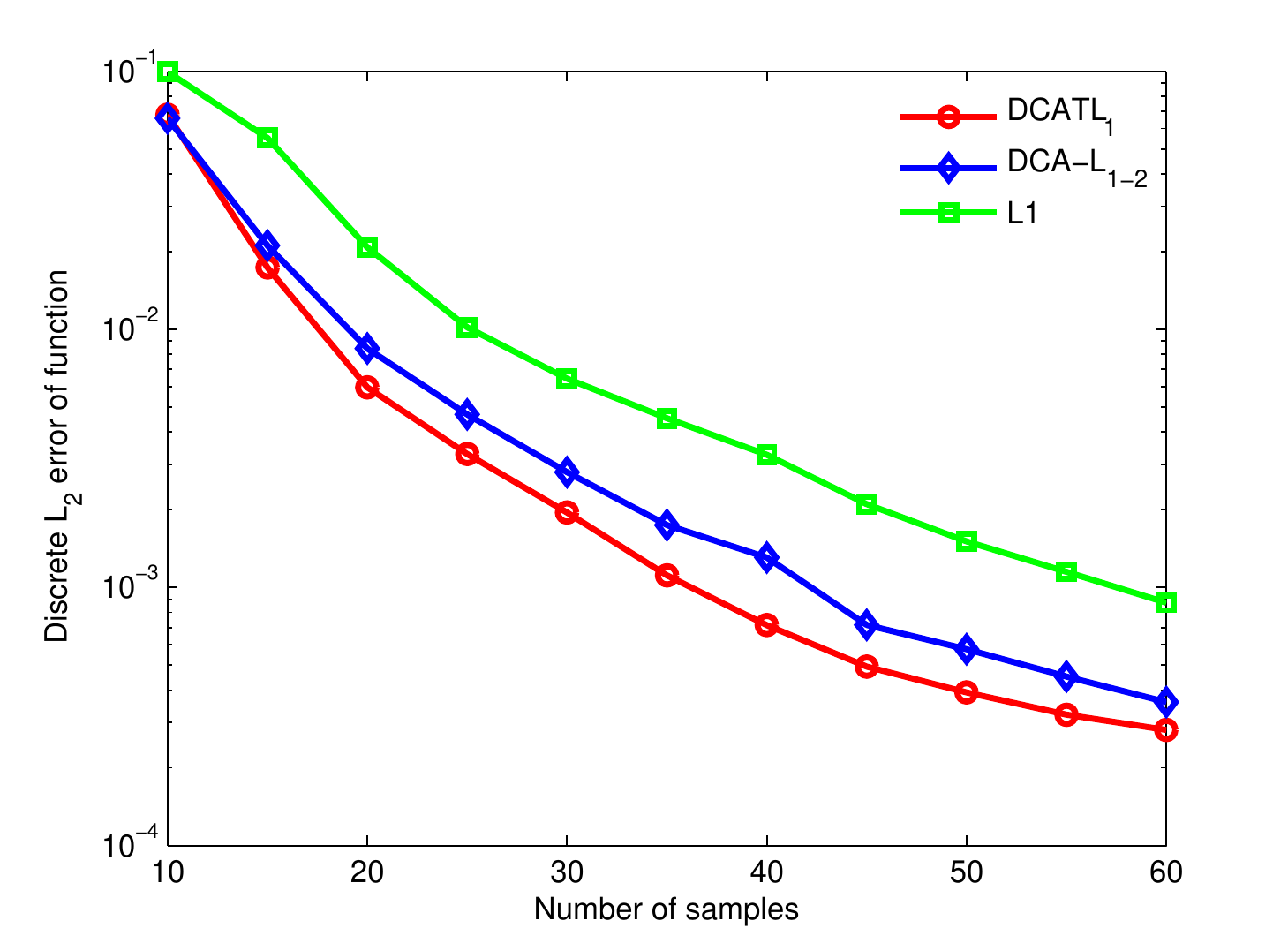}\quad
\includegraphics[width=0.48\textwidth,height=0.23\textheight]{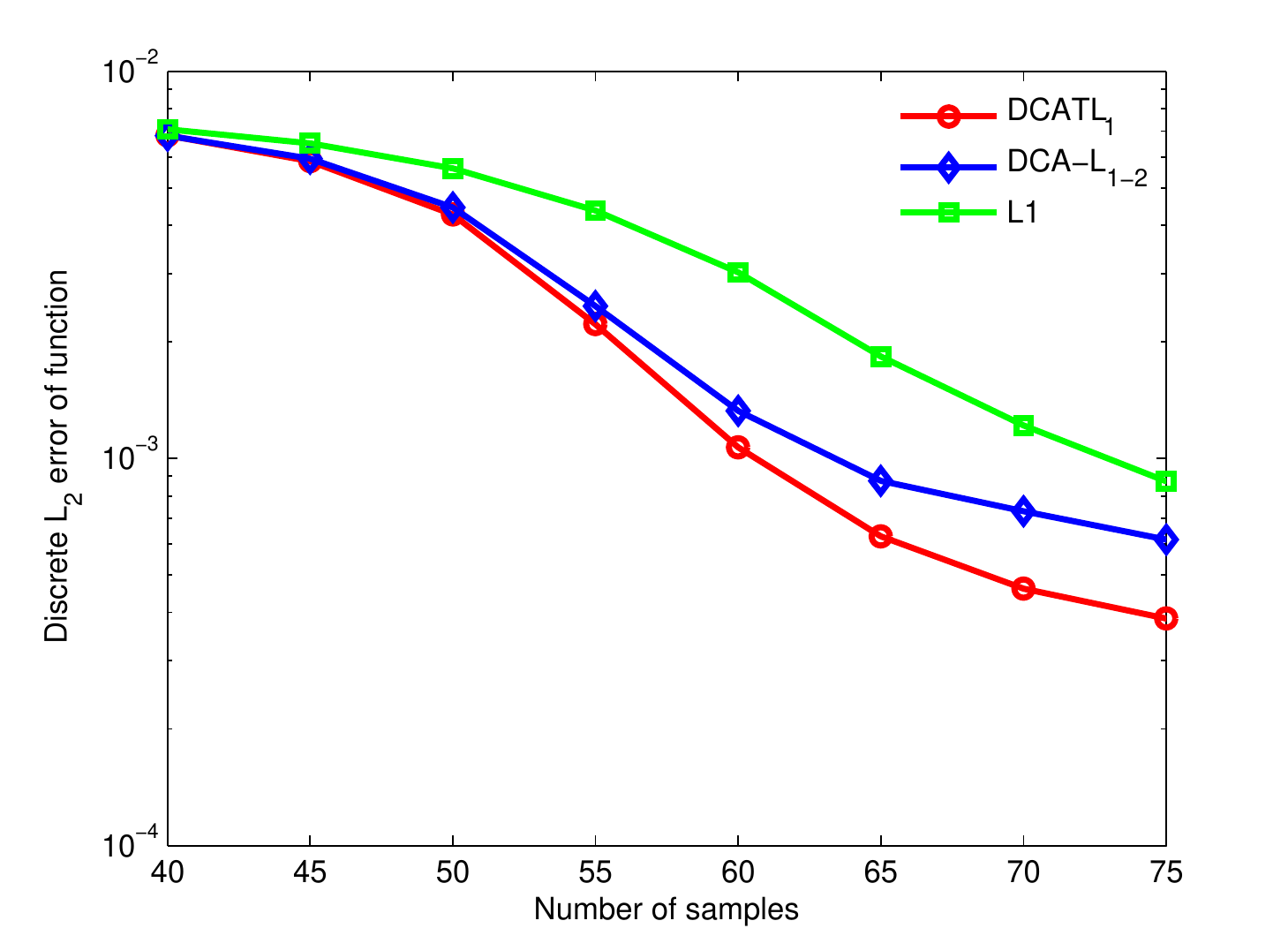}
\caption{Discrete $\ell_2$ error versus number of sample points for $f_1(z)$. Left: low-dimension, $d=2, k=20, N=231$; Right: high-dimension, $d=10, k=4, N=1001$.}\label{fig:f1}
\end{figure}

We consider the following multi-dimensional analytical functions and build their gPC approximations:
\begin{align*}
\begin{split}
&f_1(z)=\frac{1}{\sum_{i=1}^d0.5+0.1z_i},\\
&f_2(z)=\Big(1+\frac{1}{2d}\sum_{i=1}^d\frac{i-1/2}{d}(z_i+1)\Big)^{-d-1}.
\end{split}
\end{align*}

\begin{figure}[!ht]
\centering
\includegraphics[width=0.48\textwidth,height=0.23\textheight]{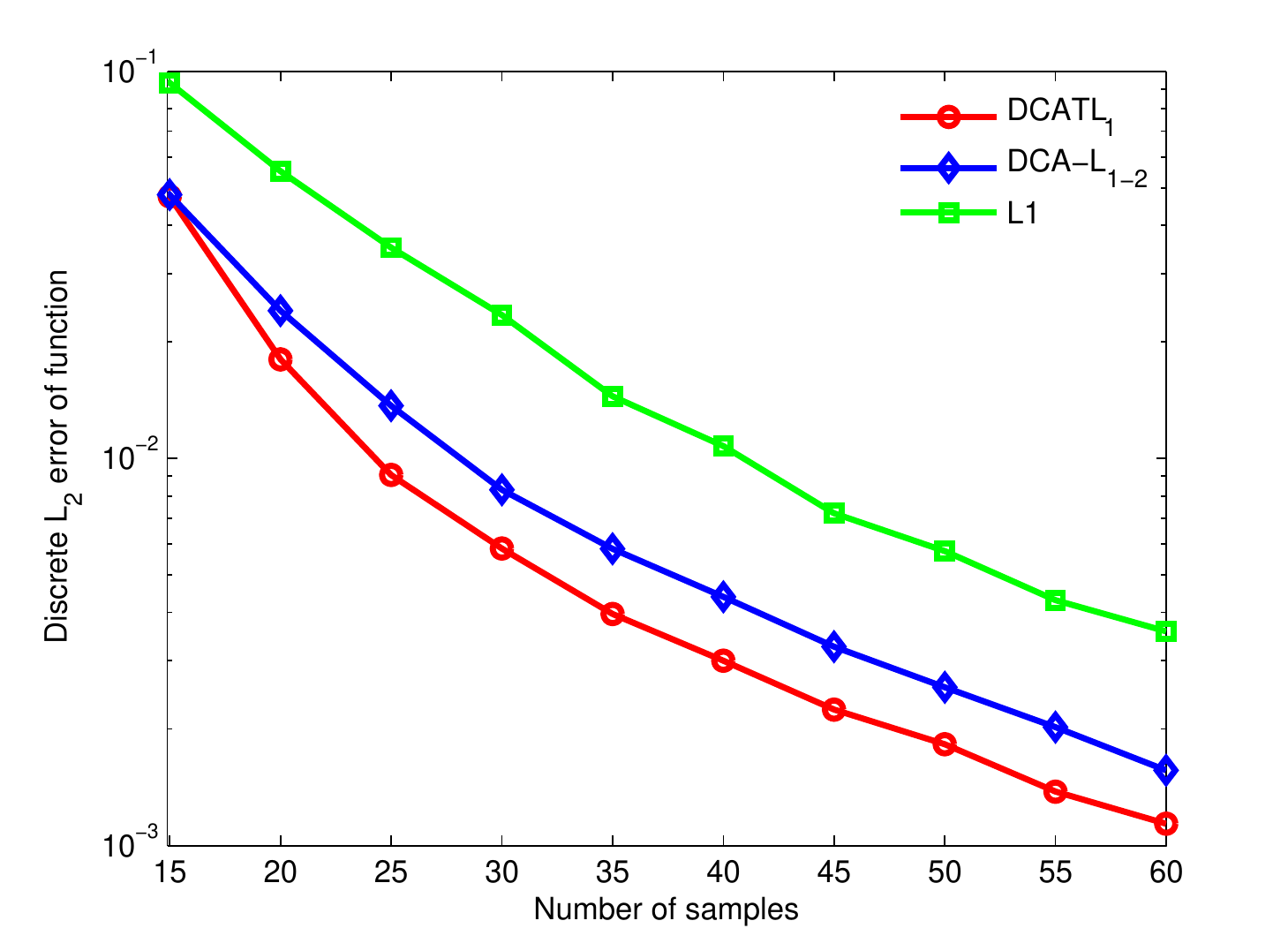}\quad
\includegraphics[width=0.48\textwidth,height=0.23\textheight]{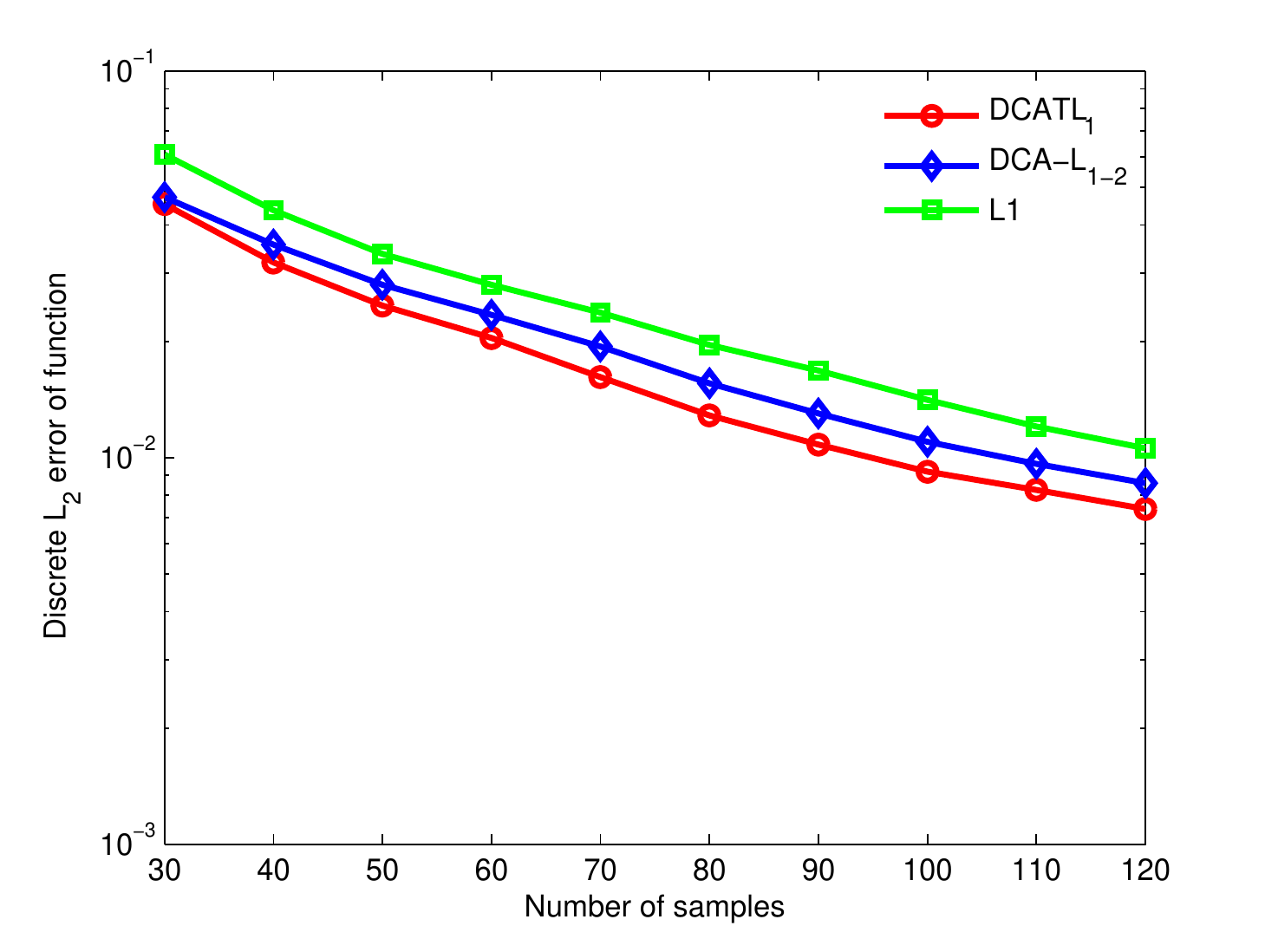}
\caption{Discrete $\ell_2$ error versus number of sample points for $f_2(z)$. Left: low-dimension, $d=2, k=20, N=231$; Right: high-dimension, $d=6, k=5, N=486$.}\label{fig:f2}
\end{figure}

Numerical results are displayed in Figures \ref{fig:f1} and \ref{fig:f2}, illustrating the superiority of DCA-TL1 minimization. To be specific, on the left hand side of Figure \ref{fig:f1}, we test the approximation error of $f_1(z)$ in low dimensional case, and on the right hand side of Figure \ref{fig:f1} we consider the high dimensional case. It is obviously that DCA-TL1 algorithm again gets the best performance and much better than the other two methods in both low dimensional ($d=2$) case and high dimensional ($d=6$) case. Numerical results for $f_2(z)$ are given in Figure \ref{fig:f2}. From this figure, we can see that DCA-TL1 minimization gains much better approximation than the other two minimization algorithms in the low dimensional ($d=2$) case and slightly better approximation in the high dimensional ($d=6$) case.

\subsection{KdV equation with random force}
Finally, we consider the solution to the Korteweg de Vries (KdV) equation with random force \cite{Zheng_2015MEPCM}:
\begin{align}\label{eq:Kdv}
u_t+2uu_x+u_{xxx}=f(t;\xi),  \quad x\in[-70, 70], \ t\in[0,T].
\end{align}
with initial condition
\begin{align}\label{eq:Kdvini}
u(x,0)=\frac{3\nu}{2}\text{sech}^2(\frac{1}{2}\sqrt{a}(x-x_0)).
\end{align}
where $\nu$ is associated with the speed of the solution, $x_0$ is the initial position of the solution. In this experiment, the random force is defined by a linear combination of random fields, i.e.,
\begin{align*}
f(t;\xi)=\sigma\sum_{i=1}^d\sqrt{\lambda_i}\phi_i(t)\xi_i.
\end{align*}
where $\sigma$ is a constant and $\{\lambda_i, \phi_i(t)\}$ are the leading eigenpairs of the Karhunen-Lo\'eve (KL) expansion kernel \cite{Xiu_2010SCbook}:
\begin{align*}
C(x,x^{\prime})=\exp(\frac{|x-x^{\prime}|}{\ell_c}).
\end{align*}
In this application we set $\ell_c = 0.25, \sigma= 0.1$ and choose the quantity of interest (QoI) to be $u(x, t; \xi)$ at $x = -6.5878, t = 1$. We need to solve a deterministic problem of the KdV equation at each collocation point for the random input $\xi$.  Hence, in our numerical implementation, we use the Chebyshev-Gauss-Lobatto collocation discretization in physical space. For the time marching, we use the third-order Adams--Bashforth scheme for the nonlinear convection term $uu_x$ and then apply Crank--Nicolson scheme for the dispersion term $u_{xxx}$.

To examine the convergence of TL1 method for the above more complicated and nonlinear differential equation, we compare the relative averaged root mean square error (reRMSE) for the solution $u$ at $x=-6.5878$, $t=1$ between the reference solution and approximated solution computed by the standard $\ell_1$, $\ell_{1-2}$and TL1-minimization, using $100$ independent replications. Here
\begin{align}\label{def:er}
\textrm{reRMSE}=\frac{\Big(\frac{1}{Q}\sum_{i=1}^Q|\tilde{f}(z^i)-f(z^i)|^2\Big)^{1/2}}{\Big(\frac{1}{Q}\sum_{i=1}^Q|f(z^i)|^2\Big)^{1/2}}.
\end{align}
with $Q = 100$.
We test a low dimensional random input with $d=2, k=20$ and a high dimensional case with $d=10, k=4$. In Figure \ref{fig:kdverrmodel}, we plot the convergence behavior of $\ell_1$, $\ell_{1-2}$ and TL1 minimization for problem \eqref{eq:Kdv}, where the random variables are sampled from i.i.d. $U[-1,1]$ and the gPC-based polynomial is Legendre type. We see that with the increment of the total number of collocation points, the discrete $\ell_2$ error reduced. 

\begin{figure}[!ht]
\centering
\includegraphics[width=0.48\textwidth,height=0.23\textheight]{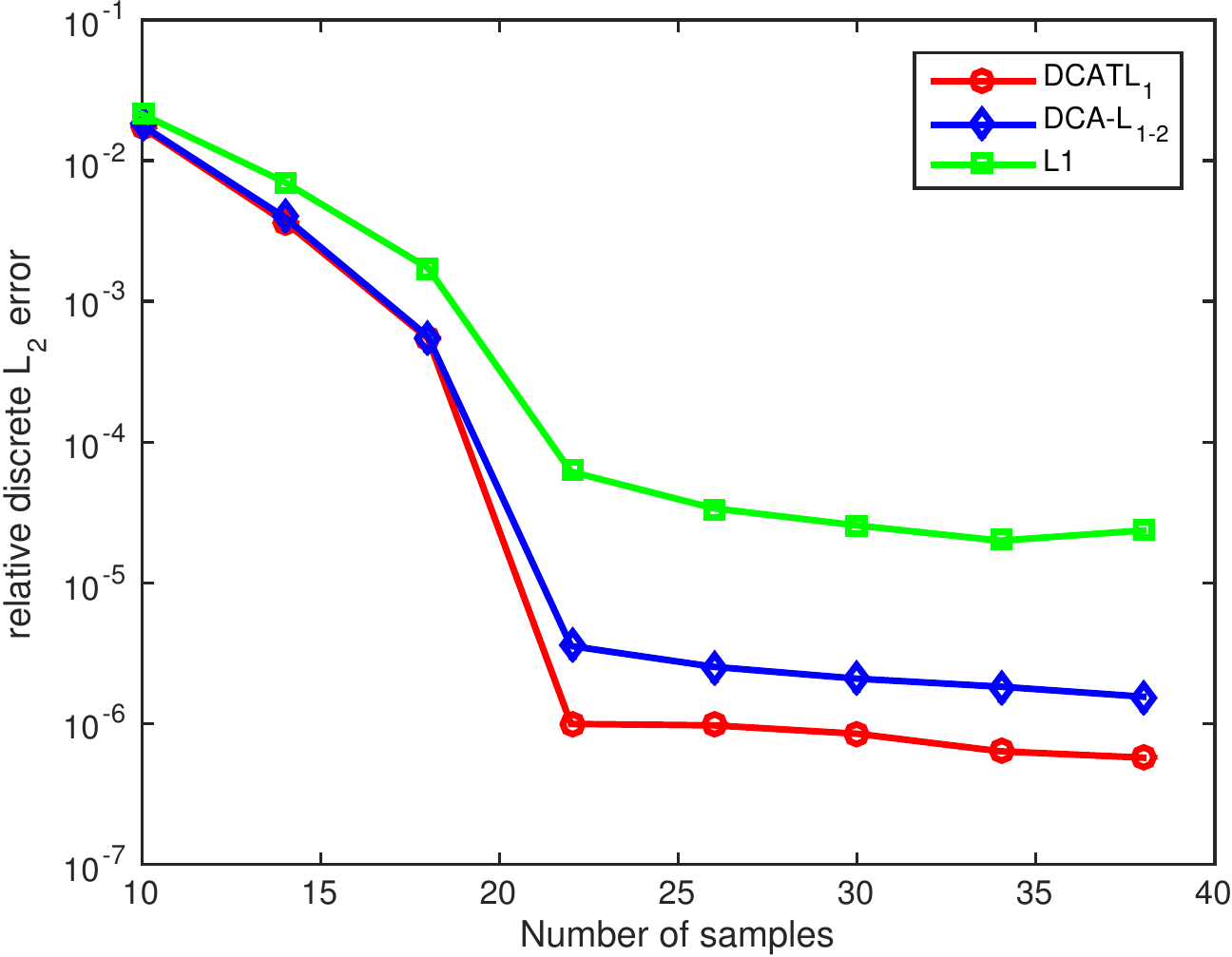}\quad
\includegraphics[width=0.48\textwidth,height=0.23\textheight]{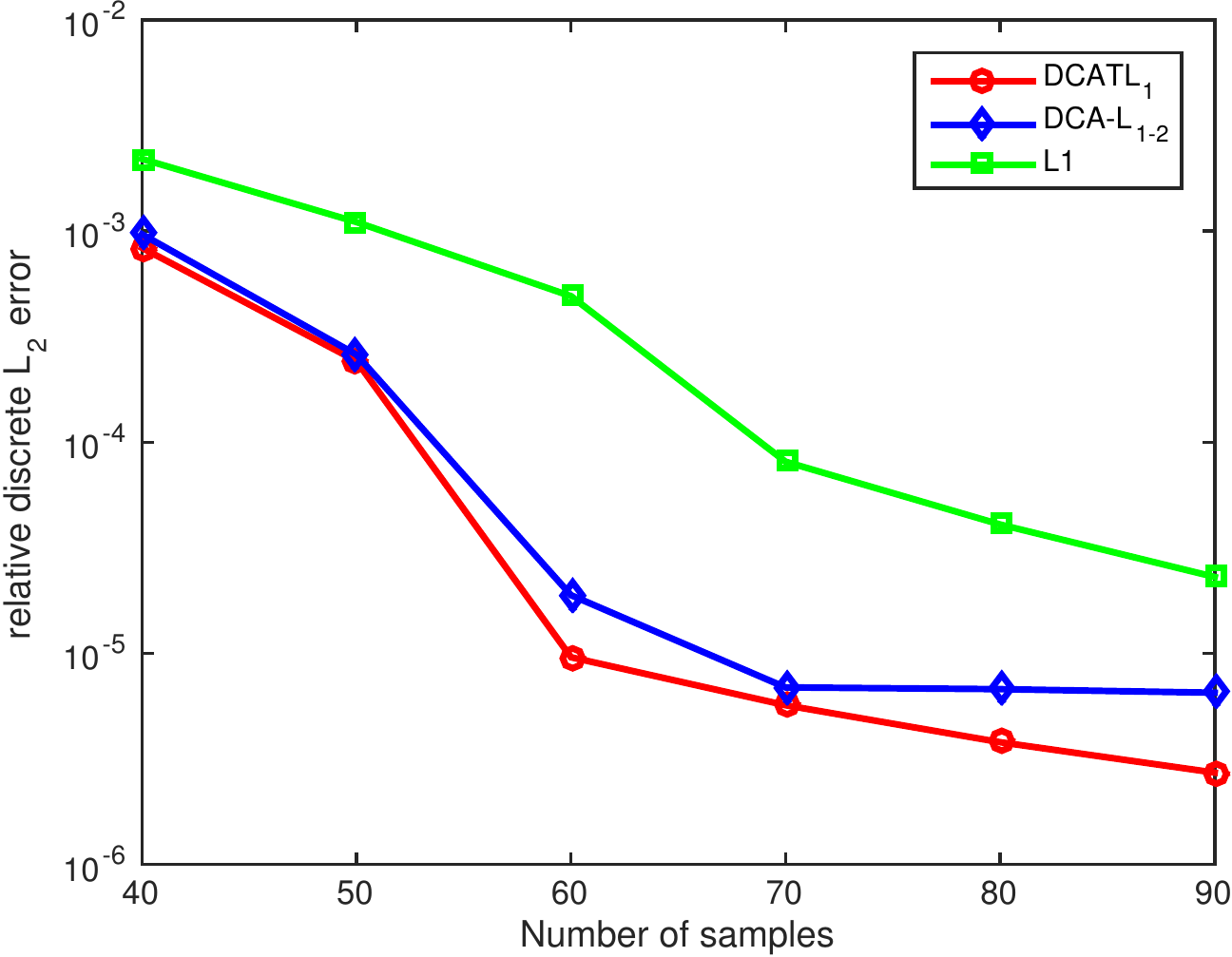}
\caption{Comparison of the relative discrete $\ell_2$ error in reconstructing $u(x=-6.5878, t=1)$ via standard $\ell_1$-minimization, $\ell_{1-2}$-minimization and TL1 minimization. Left: $d=2, k=20$; Right: $d=10, k=4$.}\label{fig:kdverrmodel}
\end{figure}

We finally draws the magnitudes of the gPC coefficients computed by
the three methods for both settings $d=2,k=20,N=231$ and $d=10,k=4,N=1001$ in Figure \ref{fig:kdvcoefmodel}. We see that TL1 minimization gives sparser solutions than $\ell_{1-2}$ and standard $\ell_1$ minimization.
\begin{figure}[!ht]
\centering
\includegraphics[width=0.48\textwidth,height=0.23\textheight]{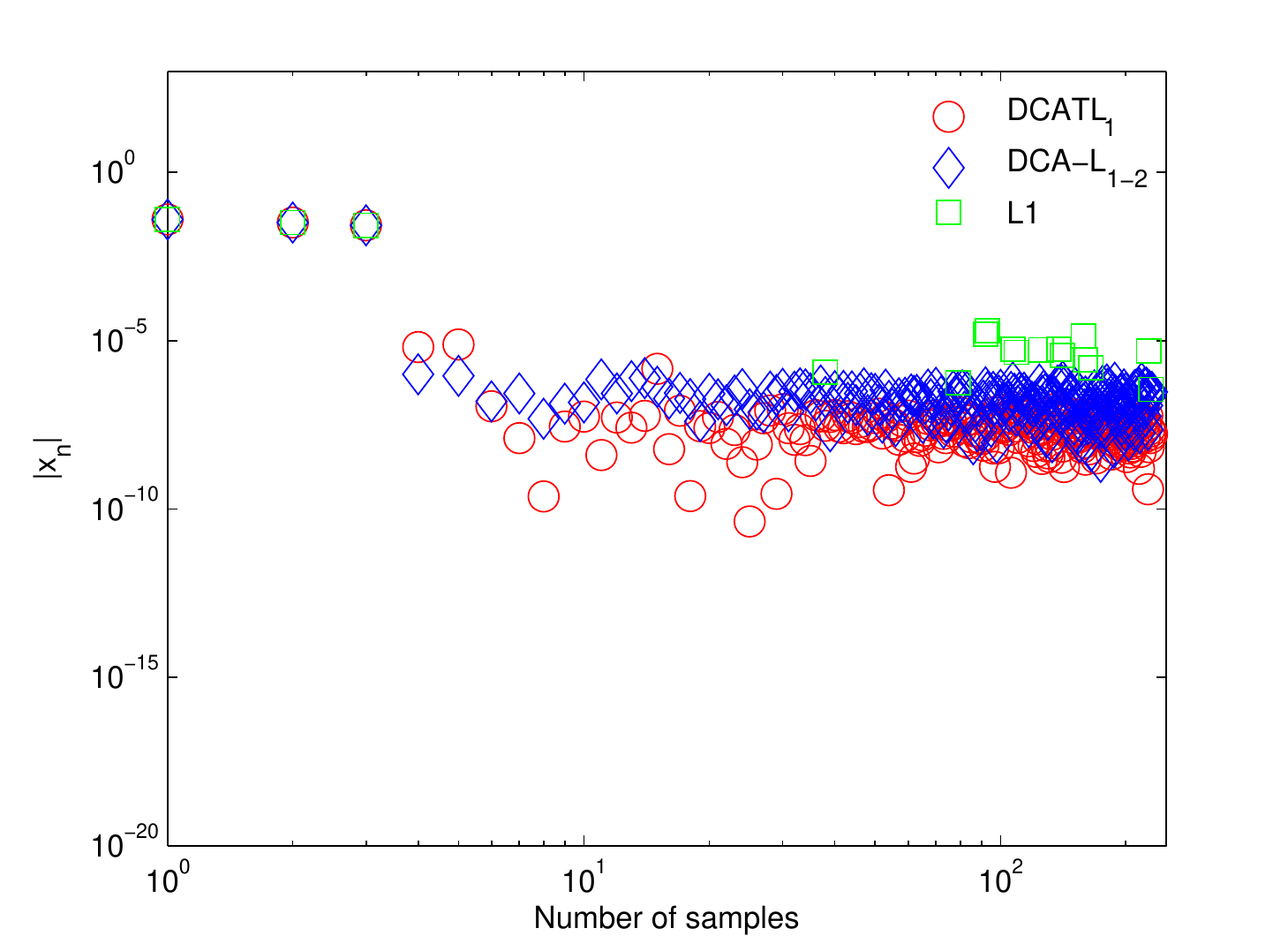}\quad
\includegraphics[width=0.48\textwidth,height=0.23\textheight]{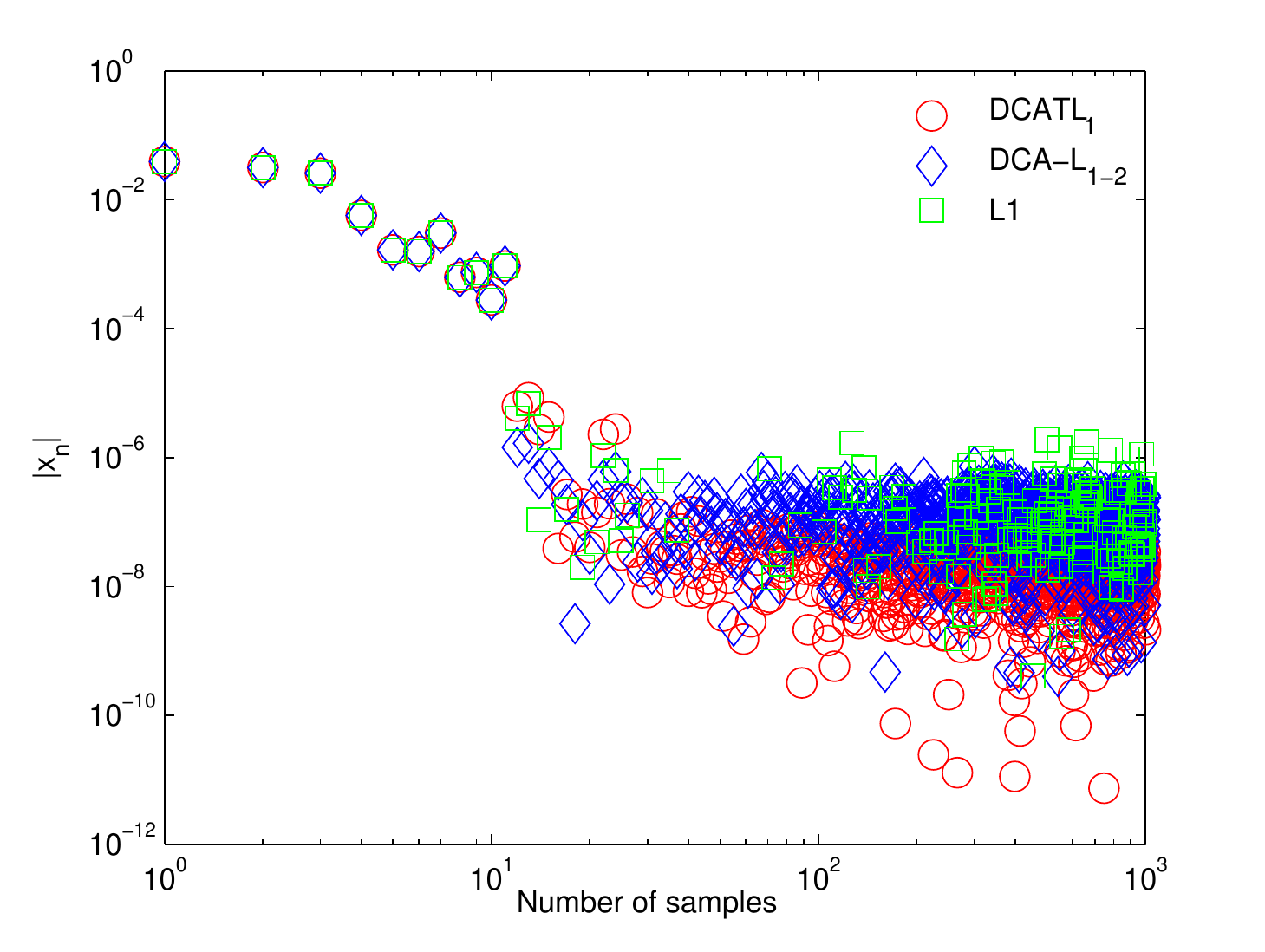}
\caption{The magnitude of the gPC coefficients computed with standard $\ell_1$-minimization, $\ell_{1-2}$-minimization and TL1 minimization for the
KdV equation. Left: $d=2, k=20,N=231$ and sample number $M=30$; Right: $d=10, k=4,N=1001$ and sample number $M=200$.}\label{fig:kdvcoefmodel}
\end{figure}
\section{Conclusion}

From the theoretical analysis of TL1 minimization, we conclude that under a more elegant RIP condition, TL1 method can reproduce the solution of $\ell_0$ minimization. If the target function is sparse, then we can obtain an exact solution, while if non-sparse, then we derive a stable approximation. Moreover, our theoretical analysis guarantees the recovery no matter the measurement data contain noise or not. Numerical experiments demonstrate that the TL1 minimization method is the most efficient method among all three selected methods (standard $\ell_1$, $\ell_{1-2}$ and TL1 minimization). It is worth noting that to recover non-sparse functions, we take an adaptive DCA-TL1 algorithm (Algorithm \ref{alg_2}), because we can not determine the optimal parameter $a$ before solving the problem. Finally, in this paper we only consider the noiseless recovery algorithm and noiseless numerical tests, the noisy recovery algorithm is left for the future study, even though the theoretical results are available.

\appendix
\section{Proof of Theorem \ref{thm:noisyrecover}}\label{apx:prof_noisyrecover}

To prove the Theorem \ref{thm:noisyrecover}  we follow the lines of \cite{Candes_2008Rip}.

\begin{proof}
Let $x$ be any feasible solution satisfying the constraint $b=Bx+e$ and $\hat{x}$ is the solution to the problem \eqref{eq:PaTL1mindenoise}, hence, we obtain
\begin{align}\label{eq:proof2eq1}
P_a(\hat{x})\leq P_a({x}).
\end{align}

We set $\hat{x}=x+h$ and decompose $h$ into a sum of s-sparse vectors $h_{T_0}, h_{T_1}, h_{T_2},\ldots$. Let $T_0$ corresponds to the locations of the $s$ largest coefficients of $x$; $T_1$ to the locations of the $s$ largest coefficients of $h_{T_0^c}$, $T_2$ to the locations of the $s$ largest coefficients of $h_{(T_0\cup T_1)^c}$, and so on. Also denote $T_{01}=T_0\cup T_1$. Then, \eqref{eq:proof2eq1} becomes
\begin{equation*}
P_a(x+h)\leq P_a(x).
\end{equation*}
Since $x = x_{T_0}+x_{T^c_0}$ and $h=h_{T_0}+h_{T_0^c}$, then
\begin{equation}\label{eq:Pa_ineq_r}
P_a(x_{T_0}+x_{T_0^c}+h_{T_0}+h_{T_0^c})\leq P_a(x_{T_0}+x_{T_0^c})=P_a(x_{T_0})+P_a(x_{T^c_0}).
\end{equation}
On the other hand, by Lemma \ref{lem:normtrieq}, one have
\begin{equation}\label{eq:Pa_ineq_l}
\begin{split}
P_a(x_{T_0}+x_{T_0^c}+h_{T_0}+h_{T_0^c})&=P_a(x_{T_0}+h_{T_0})+P_a(x_{T_0^c}+h_{T_0^c})\\
&=\sum_{i\in T_0}\rho_a(|x_{T_{0,i}}+h_{T_{0,i}}|)+\sum_{i\in T_0^c}\rho_a(|x_{T_{0^c,i}}+h_{T_{0^c,i}}|)\\
&\geq\sum_{i\in T_0}\rho_a(|x_{T_{0,i}}|)-\sum_{i\in T_0}\rho_a(|h_{T_{0,i}}|)\\
&\quad +\sum_{i\in T_0^c}\rho_a(|x_{T_{0^c,i}}|)-\sum_{i\in T_0^c}\rho_a(|h_{T_{0^c,i}}|) \\
&=P_a(x_{T_0})-P_a(h_{T_0})+P_a(h_{T_0^c})-P_a(x_{T_0^c}).
\end{split}
\end{equation}
From \eqref{eq:Pa_ineq_r} and \eqref{eq:Pa_ineq_l} we have
\begin{equation}\label{eq:hconstraint}
P_a(h_{T_0^c})\leq P_a(h_{T_0})+2P_a(x_{T_0^c}).
\end{equation}

The rest of the proof mainly contains two steps. Firstly, we set an upper bound on $\sum_{j\geq2}\|h_{T_j}\|_2$ and prove that $\|h_{(T_0\cup T_1)^c}\|_2$ is essentially bounded by $\|h_{(T_0\cup T_1)}\|_2$. Secondly, we show that $\|h_{(T_0\cup T_1)}\|_2$ is relatively small.

For the first step, from \cite{Zhang_2016DCATL1},we note that
\begin{align*}
\sum_{j\geq2}\|h_{T_j}\|_2\leq s^{-1/2}P_a(h_{T_0^c}),
\end{align*}
thus we obtain the useful estimate
\begin{align*}
\|h_{(T_0\cup T_1)^c}\|_2=\|\sum_{j\geq2}h_{T_j}\|_2\leq\sum_{j\geq2}\|h_{T_j}\|_2\leq s^{-1/2}P_a(h_{T_0^c}).
\end{align*}
Combined with \eqref{eq:hconstraint} we obtain
\begin{align*}
\|h_{(T_0\cup T_1)^c}\|_2\leq s^{-1/2}P_a(h_{T_0})+2s^{-1/2}P_a(x_{T_0^c}).
\end{align*}
By the inequality $(3.10)$ in \cite{Zhang_2016DCATL1}, we have
\begin{align}\label{eq:deltainequ1}
\|h_{(T_0\cup T_1)^c}\|_2\leq(1+\frac{1}{a})\|h_{T_{01}}\|_2+2s^{-1/2}P_a(x_{T_0^c}).
\end{align}
Since $Bh_{T_0\cup T_1}=Bh-\sum_{j\geq2}Bh_{T_j}$, it follows that
\begin{align*}
\|Bh_{T_0\cup T_1}\|_2^2=\langle Bh_{T_0\cup T_1}, Bh\rangle-\langle Bh_{T_0\cup T_1},\sum_{j\geq2}Bh_{T_j}\rangle.
\end{align*}
By the Cauchy-Schwarz inequality and the restricted isometry property, we have
\begin{align*}
|\langle Bh_{T_0\cup T_1}, Bh\rangle|\leq \|Bh_{T_0\cup T_1}\|_2\|Bh\|_2\leq2\varepsilon\sqrt{1+\delta_{2s}}\|h_{T_0\cup T_1}\|_2,
\end{align*}
moreover,
\begin{align*}
|\langle Bh_{T_i}, Bh_{T_j}\rangle|\leq\delta_{2s}\|h_{T_i}\|_2\|h_{T_j}\|_2, \ \text{for}\ i=0,1, j\geq 2
\end{align*}
and
\begin{align*}
\|h_{T_0}\|_2+\|h_{T_1}\|_2\leq\sqrt2\|h_{T_0\cup T_1}\|_2.
\end{align*}
Therefore
\begin{align*}
(1-\delta_{2s})\|h_{T_0\cup T_1}\|_2^2\leq\|Bh_{T_0\cup T_1}\|_2^2\leq\|h_{T_0\cup T_1}\|_2(2\varepsilon\sqrt{1+\delta_{2s}}+\sqrt2\delta_{2s}\sum_{j\geq2}\|h_{T_j}\|_2).
\end{align*}
Furthermore, we obtain
\begin{align*}
\begin{split}
\|h_{T_0\cup T_1}\|_2&\leq\frac{2\varepsilon\sqrt{1+\delta_{2s}}}{1-\delta_{2s}}+\frac{\sqrt2\delta_{2s}}{1-\delta_{2s}}\sum_{j\geq2}\|h_{T_j}\|_2 \\
&\leq\frac{2\varepsilon\sqrt{1+\delta_{2s}}}{1-\delta_{2s}}+\frac{\sqrt2\delta_{2s}}{1-\delta_{2s}}s^{-1/2}P_{a}(h_{T_0^c}).
\end{split}
\end{align*}
Set $\alpha=\frac{2\sqrt{1+\delta_{2s}}}{1-\delta{2s}}$ and $\beta=\frac{\sqrt2\delta_{2s}}{1-\delta_{2s}}$, hence
\begin{align*}
\begin{split}
\|h_{T_0\cup T_1}\|_2&\leq\alpha\varepsilon+\beta(P_a(h_{T_0})+2P_a(x_{T_0^c})) \\
&\leq\alpha\varepsilon+\beta(1+\frac{1}{a})\|h_{T_{01}}\|_2+2\beta s^{-1/2}P_a(x_{T_0^c}),
\end{split}
\end{align*}
It follows that
\begin{align}\label{eq:deltainequ2}
\|h_{T_{01}}\|_2\leq(1-\beta(1+\frac{1}{a}))^{-1}(\alpha\varepsilon+2\beta s^{-1/2}P_a(x_{T_0^c})).
\end{align}
Finally, by combining \eqref{eq:deltainequ1} and \eqref{eq:deltainequ2} we obtain that
\begin{align*}
\begin{split}
\|h\|_2&\leq\|h_{T_{01}}\|+\|h_{T_{01}^c}\|_2\\
&\leq(2+\frac{1}{a})(1-\beta(1+\frac{1}{a}))^{-1}\alpha\varepsilon+(4\beta(2+\frac{1}{a})(1+\frac{1}{a}))^{-1}+2)s^{-1/2}P_a(x_{T_0^c})\\
&=C_0s^{-1/2}P_a(x-x_s)+C_1\varepsilon.
\end{split}
\end{align*}
where we defined $C_0$ and $C_1$ as below
\begin{align}
\begin{split}
C_0&=\frac{6a\beta+2a+2\beta}{a-(a+1)\beta}\\
&=\frac{(6\sqrt2a-2a+2\sqrt2)\delta_{2s}+2a}{a-((\sqrt2+1)a+\sqrt2)\delta_{2s}},
\end{split}
\end{align}
\begin{align}\label{eq:TL1normalC}
\begin{split}
C_1&=\frac{(2a+1)\alpha}{a-(a+1)\beta}\\
&=\frac{2(2a+1)\sqrt{1+\delta_{2s}}}{a-((\sqrt2+1)a+\sqrt2)\delta_{2s}}.
\end{split}
\end{align}
\end{proof}

\bibliographystyle{plain}
\bibliography{sc-uq-DCATL1}

\end{document}